\providecommand{\keywords}[1]{\textbf{\textit{Key words:}} #1}
\numberwithin{equation}{section}
\theoremstyle{plain}
\newtheorem{thm}{Theorem}[section]
\newtheorem{lem}{Lemma}[section]
\newtheorem{prop}{Propertion}[section]
\newtheorem{cor}{Corollary}[section]
\theoremstyle{definition}
\newtheorem{defn}{Definition}[section]
\newtheorem{prob}{Problem}[section]
\theoremstyle{Remark}
\begin{document}
\title{\textbf{On uniform and nonhomogeneous vector bundles over Grassmannians}}
\author{Rong Du, \thanks{School of Mathematical Sciences
		Shanghai Key Laboratory of PMMP,
		East China Normal University,
		Rm. 312, Math. Bldg, No. 500, Dongchuan Road,
		Shanghai, 200241, P. R. China,
		rdu@math.ecnu.edu.cn.
		%The Research is Sponsored by Innovation Action Plan (Basic research projects) of Science and Technology Commission of Shanghai Municipality (Grant No. 21JC1401900) and Science and Technology Commission of Shanghai Municipality (Grant No. 18dz2271000).
	}
	Yiting Wang
	\thanks{School of Mathematical Sciences
		Shanghai Key Laboratory of PMMP,
		East China Normal University,
		No. 500, Dongchuan Road,
		Shanghai, 200241, P. R. China,
		1127651752@qq.com
	}
	and Dazhi Zhang
	\thanks{School of Mathematical Sciences
		Shanghai Key Laboratory of PMMP,
		East China Normal University,
		No. 500, Dongchuan Road,
		Shanghai, 200241, P. R. China,
		956502686@qq.com
		%The Research is Sponsored by Innovation Action Plan (Basic research projects) of Science and Technology Commission of Shanghai Municipality (Grant No. 21JC1401900) and Science and Technology Commission of Shanghai Municipality (Grant No. 18dz2271000).
		All the authors are sponsored by Innovation Action Plan (Basic research projects) of Science and Technology Commission of Shanghai Municipality (Grant No. 21JC1401900), Natural Science Foundation of Chongqing, China (general program, Grant No. CSTB2023NSCQ-MSX0334) and Science and Technology Commission of Shanghai Municipality (Grant No. 18dz2271000).
	}
}

\date{}
\maketitle
	
\begin{abstract}
We demonstrate the existence of a uniform and nonhomogeneous vector bundle $E$ of rank $(n-d)(m+1)-1$ over Grassmannian $\mathbb{G}(d,n)$, where $m>d$ and $1\le d \le n-d-1$ with a $\mathbb{P}$-homogeneity degree $h(E)=d$. Particularly, we establish an upper bound of $3(n-d)-2$ for the uniform-homogeneous shreshold of $\mathbb{G}(d,n)$. Additionally, we construct indecomposable uniform vector bundles of rank $(d+2)(n-d)+d-2+\sum\limits_{i=0}^p\tbinom{d-1+p-i}{p-i}(1+i)-\tbinom{p+d}{p}$ that are nonhomogeneous over $\mathbb{G}(d,n)$.
\end{abstract}

\keywords{uniform vector bundle, nonhomogeneous vector bundle,  Grassmannian}

\section{Introduction}
Holomorphic vector bundles over a complex projective manifold constitute fundamental research subjects in algebraic geometry and complex geometry. According to Grothendieck's theorem, any holomorphic vector bundle over a projective line decomposes as a direct sum of line bundles. However, in the case where the dimension of the projective space exceeds one, the situation becomes intricate. Consequently, the decomposition of holomorphic vector bundles on higher-dimensional projective spaces has long been a central concern within the realm of problems concerning vector bundles in algebraic geometry and complex geometry. For brevity, we refer to vector bundles of rank $r$ as $r$-bundles in the context.

One of the extensively studied classes pertains to uniform vector bundles over projective spaces; these are bundles whose splitting types remain consistent regardless of the
chosen lines. Much effort has been dedicated to classifying uniform vector bundles. For detailed insights, refer to the introductions in \cite{M-O-C}, \cite{du2023vector}, or \cite{D-F-G}. Another notable class is that of homogeneous vector bundles, which retain their structure under the automorphism pullback of the projective spaces. It is evident that homogeneous vector bundles are also uniform. Initially, there was a prevailing belief, supported by the classification results of uniform $2$-bundles and $3$-bundles over projective spaces (\cite{van1971uniform}, \cite{sato1976uniform}, and \cite{elencwajg1978fibres}), that all uniform bundles are homogeneous. However, in 1979, Elencwajg refuted this belief by constructing a uniform $4$-bundle over $\mathbb{P}^2$ that was not homogeneous (\cite{elencwajg1979fibres}). Subsequently, Hirschowitz produced examples of uniform nonhomogeneous bundles of rank $3n-1$ over $\mathbb{P}^n$ for $n\ge 3$ (\cite{2011Vector}). In 1980, Dr{\'e}zet demonstrated the existence of uniform nonhomogeneous bundles of rank  $2n$ over $\mathbb{P}^n$ for $n\ge 3$ (\cite{drezet1980exemples}).

Determining the integers $t$ such that uniform $t$-bundles over $\mathbb{P}^n$ are homogeneous is a fundamental question. Specifically, we aim to find the largest integer $UH(\mathbb{P}^n)$ for which uniform $k$-bundles over $\mathbb{P}^n$ are homogeneous, for every $k\leq UH(\mathbb{P}^n)$. This integer is referred to as the uniform-homogeneous threshold of $\mathbb{P}^n$. However, it remains a longstanding conjecture that every uniform vector bundle of rank  $r<2n$ is homogeneous, suggesting $UH(\mathbb{P}^n)=2n-1$. According to the classification theorem of uniform $(n+1)$-bundles over $\mathbb{P}^n$ by Ellia (\cite{Ell}) and Ballicao (\cite{Bal}), we have the inequality $n+1\leq UH(\mathbb{P}^n)\leq 2n-1$.

Extending this analysis to general rational homogeneous spaces $X$ with Picard number $1$, we define the uniform-homogeneous threshold $UH(X)$ of $X$. Among such spaces, Grassmannians stand out as the simplest examples, excluding projective spaces. In 1985, Guyot (\cite{guyot1985caracterisation}) demonstrated that when $r\leq d+1$, uniform $r$-bundles over $\mathbb{G}(d,n)$ ($d+1\leq n-d$) are homogeneous. Recently, Zhou and the first author provided further insight by establishing that uniform $(d+2)$-bundles over $\mathbb{G}(d,n)$ are also homogeneous, as per their classification theorem (\cite{D-Z}).

In this research paper, the authors have presented a construction of indecomposable uniform vector bundles over the Grassmannian $\mathbb{G}(d, n)$ of a specific rank. The rank is given by the expression $$(d+2)(n-d)+d-2+\sum\limits_{i=0}^p\tbinom{d-1+p-i}{p-i}(1+i)-\tbinom{p+d}{p}$$ where the splitting type is specified as $(1,\dots,1,0,\dots,0)$. Interestingly, these constructed bundles are shown to be nonhomogeneous over the Grassmannian.

Furthermore, the authors establish a theorem (referred to as Theorem \ref{MThm} in the paper) concerning the $\mathbb{P}$-homogeneity  (see Definition \ref{Ph}) of vector bundles.

\begin{thm}\emph{(Theorem \ref{MThm})}
	For every integer $d$ $(1\le d \le n-d-1)$, there exists a vector bundle $E$ of rank $(n-d)(m+1)-1~(m>d)$ over $\mathbb{G}(d,n)$ with the degree of $\mathbb{P}$-homogeneity $h(E)=d$. In particular,
	$E$ is uniform of splitting type $(1,\dots,1,0,\dots,0)$ but not nonhomogeneous.
\end{thm}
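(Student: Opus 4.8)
The plan is to produce $E$ as the cokernel of a single global section of a direct sum of copies of the universal quotient bundle $Q$. Via the canonical isomorphism $H^0(\mathbb{G}(d,n),Q)\cong\mathbb{C}^{n+1}$, a choice of vectors $v_0,\dots,v_m\in\mathbb{C}^{n+1}$ determines a morphism $s=(s_{v_0},\dots,s_{v_m})\colon\mathcal{O}\to Q^{\oplus(m+1)}$, where $s_v$ sends $[V]$ to the class of $v$ in $\mathbb{C}^{n+1}/V$. Thus $s$ vanishes at $[V]$ exactly when $V$ contains all of $v_0,\dots,v_m$; choosing the $v_i$ in general position makes $\dim\langle v_0,\dots,v_m\rangle\ge d+2$, which by $m>d$ exceeds $\dim V=d+1$, so $s$ is nowhere vanishing. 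I would then define $E$ by
\[
0\longrightarrow\mathcal{O}\stackrel{s}{\longrightarrow}Q^{\oplus(m+1)}\longrightarrow E\longrightarrow 0,
\]
a vector bundle of rank $(n-d)(m+1)-1$ with $c_1(E)=(m+1)H$ for the Plücker generator $H$.

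To see that $E$ is uniform I would restrict the defining sequence to an arbitrary line $\ell=\{[V]:A\subset V\subset B\}$, where $\dim A=d$ and $\dim B=d+2$. Writing $B=A\oplus D$ with $\dim D=2$ and fixing a complement $\mathbb{C}^{n+1}=B\oplus C$, one has the standard splitting $Q|_\ell\cong(D/L)\oplus C\cong\mathcal{O}(1)\oplus\mathcal{O}^{\,n-d-1}$ as $L=V/A$ varies in $\mathbb{P}(D)$. Under this identification the trivial summand of $s_{v_i}|_\ell$ is the image $c_i$ of $v_i$ in $C=\mathbb{C}^{n+1}/B$, and the $\mathcal{O}(1)$-summand is $v_i\bmod(A\oplus L)$. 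Whenever some $c_i\neq0$, the automorphisms $\bigl(\begin{smallmatrix}I&\Phi\\0&I\end{smallmatrix}\bigr)$ of $Q^{\oplus(m+1)}|_\ell$ with $\Phi\in\mathrm{Hom}(\mathcal{O}^{\,(n-d-1)(m+1)},\mathcal{O}(1)^{\,m+1})$ can be used to annihilate every $\mathcal{O}(1)$-component of $s|_\ell$, after which the quotient reads off as $E|_\ell\cong\mathcal{O}(1)^{\,m+1}\oplus\mathcal{O}^{\,(n-d-1)(m+1)-1}$, i.e. the splitting type $(1,\dots,1,0,\dots,0)$. The crux is therefore to guarantee that $c_0,\dots,c_m$ are not all zero for \emph{every} line, equivalently that $\langle v_0,\dots,v_m\rangle$ lies in no $(d+2)$-plane $B$; this is automatic once $\dim\langle v_0,\dots,v_m\rangle\ge d+3$ and requires a more careful, non-generic choice at the boundary $m=d+1$. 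Ruling out the degenerate splitting $(2,1,\dots,1,0,\dots,0)$ on every line is the principal obstacle of the construction.

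Finally I would analyse $E$ along the two families of maximal linear subspaces of $\mathbb{G}(d,n)$. On a $\sigma$-plane $\Sigma_A=\{[V]:A\subset V\}\cong\mathbb{P}^{\,n-d}$ the quotient bundle restricts to $T_{\mathbb{P}^{n-d}}(-1)$, so that $E|_{\Sigma_A}\cong\mathrm{coker}\!\bigl(\mathcal{O}\to T_{\mathbb{P}^{n-d}}(-1)^{\oplus(m+1)}\bigr)$ is precisely a Hirschowitz-type bundle on $\mathbb{P}^{n-d}$, its defining data being $v_0,\dots,v_m\bmod A$; on a dual $\sigma$-plane $\{[V]:V\subset B\}\cong\mathbb{P}^{\,d+1}$ the quotient bundle restricts to $\mathcal{O}(1)\oplus\mathcal{O}^{\,n-d-1}$ and $E$ restricts to an explicit split bundle. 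Since $\Sigma_A$ is preserved by the stabiliser of $A$, which acts on it through all of $\mathrm{Aut}(\mathbb{P}^{\,n-d})$, homogeneity of $E$ would force homogeneity of $E|_{\Sigma_A}$; as the relevant Hirschowitz-type bundle is nonhomogeneous, $E$ is nonhomogeneous.

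To pin down $h(E)=d$ in the sense of Definition \ref{Ph} I would match two bounds. For $h(E)\ge d$ I would verify that along every linear $\mathbb{P}^{d}$ the induced section retains a nonvanishing trivial part, so that the elementary automorphisms above still split the restriction and make $E|_{\mathbb{P}^{d}}$ homogeneous. For $h(E)\le d$ I would exhibit a linear $\mathbb{P}^{d+1}$ on which this trivial part is forced to degenerate---concretely, by choosing the underlying flag so that the relevant projections of $v_0,\dots,v_m$ fail to span---whence $E|_{\mathbb{P}^{d+1}}$ acquires a non-split summand and is not homogeneous. Carrying out this degeneration count at level $d+1$ while preserving genericity at level $d$, so as to isolate the exact threshold, is the delicate part of the computation; once it is in place, $E$ is uniform of splitting type $(1,\dots,1,0,\dots,0)$ with $h(E)=d$ and nonhomogeneous, which is the assertion of the theorem.
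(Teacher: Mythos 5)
Your construction is the same as the paper's: the bundle $E$ of Theorem \ref{MThm} is the quotient in the sequence \eqref{seq1}, and the analysis proceeds by restricting to linear subspaces and splitting into cases according to whether $\langle v_0,\dots,v_m\rangle$ is ``visible'' on the given subspace. However, the two steps you explicitly defer are precisely the content of the theorem, so what you have is an outline rather than a proof. Concretely, the ``principal obstacle'' you flag on lines is genuine and, at the boundary $m=d+1$, cannot be removed by any choice of the $v_i$: nowhere-vanishing of $s$ already forces $\dim\langle v_0,\dots,v_{d+1}\rangle\ge d+2$, hence $=d+2$, so $B:=\langle v_0,\dots,v_{d+1}\rangle$ is itself a $(d+2)$-plane; for any $d$-plane $A\subset B$ the line $\{V:A\subset V\subset B\}$ has all $c_i=0$, and there $E$ restricts to $\mathcal{O}(2)\oplus\mathcal{O}(1)^{m-1}\oplus\mathcal{O}^{(n-d-1)(m+1)}$. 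Thus for $d\ge1$ uniformity forces $m\ge d+2$, and the ``more careful, non-generic choice'' you hope for at $m=d+1$ does not exist. (The paper's own deduction from Lemma \ref{lemw1} sidesteps this only by treating the image $W_0$ of $\langle w_0,\dots,w_m\rangle$ in $V/V_d$ as always $(m+1)$-dimensional, which fails exactly when $V_d$ meets the span; so your worry is well placed, but it must be resolved, not merely recorded.)

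The determination of $h(E)$ is likewise not carried out, and as set up it conflicts with the rank statement. Your lower bound (``every linear $\mathbb{P}^d$ retains a nonvanishing trivial part'') requires $\langle v_0,\dots,v_m\rangle\not\subseteq V_{2d+1}$ for every $(2d+1)$-plane, i.e.\ $m+1>2d+1$, while your upper bound (``some linear $\mathbb{P}^{d+1}$ degenerates'') requires $m+1\le 2d+2$; together these pin $m=2d+1$ and a single rank, not the asserted family $(n-d)(m+1)-1$ for all $m>d$. Moreover, for the lower bound it is not enough that the trivial part is somewhere nonvanishing: you must show the restriction is the \emph{same} bundle on every linear $\mathbb{P}^d$, including those contained in a $\mathbb{P}^{d+1}$ of the form $\{V:V\subset V_{d+2}\}$ (which your flag description misses once $d\ge 2$) and those flags $V_d\subset V_{2d+1}$ for which $V_d$ meets the span, where the images of the $v_i$ become linearly dependent; this identification is exactly what the cohomological splitting in Lemma \ref{lemw1}(i) is designed to provide. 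Finally, your nonhomogeneity argument via restriction to $\Sigma_A\cong\mathbb{P}^{n-d}$ and Hirschowitz's bundles is a legitimate alternative to the paper's direct comparison of the restrictions to two linear subspaces of the same dimension, but it imports the nonhomogeneity of the Hirschowitz bundle as an unproved external input and still needs $m<n-d$. Until the line analysis is settled for the value of $m$ you actually use and both bounds on $h(E)$ are proved for that same $m$, the theorem is not established.
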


\begin{cor}
	The uniform-homogeneous shreshold of $\mathbb{G}(d,n)$ $$UH(\mathbb{G}(d,n))\le 3(n-d)-2.$$
	\end{cor}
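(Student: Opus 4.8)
The plan is to read the corollary as an extremal consequence of Theorem~\ref{MThm}. First I would unwind the definition of the uniform--homogeneous threshold: $UH(\mathbb{G}(d,n))$ is the largest integer $N$ such that every uniform bundle of rank at most $N$ over $\mathbb{G}(d,n)$ is homogeneous. Contrapositively, the existence of a single uniform \emph{nonhomogeneous} bundle of some rank $r$ forces $UH(\mathbb{G}(d,n))\le r-1$, since the defining property fails at rank $r$ and is downward closed. Hence the threshold is bounded above by one less than the minimal rank of a uniform nonhomogeneous bundle, and the whole corollary reduces to producing a uniform nonhomogeneous bundle of rank exactly $3(n-d)-1$.

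Next I would extract such a bundle from Theorem~\ref{MThm}. The theorem delivers a uniform bundle $E$ of splitting type $(1,\dots,1,0,\dots,0)$ and rank $(n-d)(m+1)-1$ with $\mathbb{P}$-homogeneity degree $h(E)=d$; since $d\ge 1>0$ the bundle is nonhomogeneous, exactly as asserted. The rank $(n-d)(m+1)-1$ is strictly increasing in $m$, so to minimise it I would push $m$ down to the smallest value for which the construction still outputs a nonhomogeneous bundle. The identity $(n-d)(m+1)-1=3(n-d)-1$ holds precisely when $m+1=3$, i.e.\ at the minimal width $m=2$; feeding this extremal parameter into the construction is what yields a uniform nonhomogeneous bundle of rank $3(n-d)-1$, whence $UH(\mathbb{G}(d,n))\le 3(n-d)-2$ by the reduction above.

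The step I expect to be the true obstacle is the admissibility and non-homogeneity of the construction at this extremal parameter. Theorem~\ref{MThm} is stated with the hypothesis $m>d$, which is exactly what pins down the clean value $h(E)=d$; at the boundary $m=2$ this hypothesis is literally met only for $d=1$, so for $d\ge 2$ I would re-examine the construction directly at $m=2$ and verify two things: that the resulting sheaf is still a uniform vector bundle of the stated splitting type, and that its $\mathbb{P}$-homogeneity degree, whatever value it now takes, remains nonzero, so that non-homogeneity survives the specialisation. Showing that the extremal member of the family stays uniform and nonhomogeneous is the crux; once that is in hand the threshold bound follows formally. As a consistency check, the analogous specialisation at $d=0$, where $\mathbb{G}(0,n)=\mathbb{P}^n$, recovers Hirschowitz's bound $UH(\mathbb{P}^n)\le 3n-2$, which is the expected degeneration of $3(n-d)-2$.
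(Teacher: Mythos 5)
Your reduction is the right one, and it is the same as the paper's (implicit) argument: Theorem \ref{MThm} supplies a uniform nonhomogeneous bundle, and the threshold is bounded above by one less than its rank. You have also correctly located the crux, namely whether the construction can be run at $m=2$ when $d\ge 2$. But that step does not merely need checking --- it fails. The construction of $E$ rests on the map $(s_{w_0},\dots,s_{w_m})\colon\mathcal{O}_\mathbb{G}\to\mathcal{Q}_\mathbb{G}^{\oplus(m+1)}$ being nowhere zero, which holds precisely because $m+1$ linearly independent vectors cannot all lie in a $(d+1)$-dimensional subspace; this forces $m\ge d+1$. For $m=2$ and $d\ge 2$ the span $W_0$ of $w_0,w_1,w_2$ has dimension $3\le d+1$, hence is contained in some $(d+1)$-plane $X$; all three sections vanish at the point $[X]\in\mathbb{G}(d,n)$, the map is not a subbundle inclusion, and the cokernel is not locally free. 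There is no ``extremal member of the family'' left to analyse, so non-homogeneity cannot ``survive the specialisation.''

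Consequently the smallest rank that Theorem \ref{MThm} actually delivers is attained at $m=d+1$, namely $(d+2)(n-d)-1$, which yields only $UH(\mathbb{G}(d,n))\le(d+2)(n-d)-2$; this coincides with the stated bound $3(n-d)-2$ exactly when $d=1$. The paper offers no separate proof of the corollary, and its other construction (the bundles $E_{H_p}$ of Section 4) has strictly larger rank, so no uniform nonhomogeneous bundle of rank $3(n-d)-1$ is exhibited for $d\ge 2$. In short: your proposal is faithful to the intended argument and your consistency check at $d=0$ (recovering Hirschowitz's bound on $\mathbb{P}^n$) is apt, but the gap you flagged is real and cannot be closed within this construction; the corollary as stated is justified only for $d=1$, and for general $d$ the constant $3$ should read $d+2$.
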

Thus, we know that $d+2\le UH(\mathbb{G}(d,n))\le 3(n-d)-2$ by \cite{D-Z}.

\section{Preliminaries}
Let $V$ be a $(n+1)$-dimensional vector space over $K$, where $K$ is an algebraically closed field of characteristic $0$ (or simply $\mathbb{C}$). We denote by $\mathbb{G}$ the Grassmannian $\mathbb{G}(d,n)$ of $(d+1)$-dimensional linear subspaces in $V$, where a point $x$ of $\mathbb{G}(d,n)$ corresponds to a $(d+1)$-dimensional linear subspace $X$ of $V$. The projective space $\mathbb{P}^n = \mathbb{P}(V)$ is the extreme Grassmannian $\mathbb{G}(0,n)$. Additionally, the Grassmannian $\mathbb{G}(d,n)$ is isomorphic to $\mathbb{G}(n-d-1,n)$, which has dimension $(d+1)(n-d)$.

In this paper, we focus on the Grassmannian $\mathbb{G}(d,n)$ with $1\leq d\leq n-d-1$.
Denote by $\mathcal{O}_{\mathbb{G}} \otimes V$ the trivial bundle on $\mathbb{G}$, where the fiber at every point is the vector space $V$. We use $\mathcal{R}_\mathbb{G}$ to represent the $(d+1)$-rank universal subbundle on $\mathbb{G}$, with the fiber at a point $x\in \mathbb{G}$ being the subspace $X$ itself. Similarly, $\mathcal{Q}_\mathbb{G}$ denotes the $(n-d)$-rank universal quotient bundle on $\mathbb{G}$, with the fiber at a point $x\in \mathbb{G}$ being the quotient space $V/X$. This leads to the following exact sequence:

%Suppose $V$ is a $(n+1)$-dimensional vector space over $K$, where $K$ is an algebraically closed field in charecteristic $0$ (or just $\mathbb{C}$). We denote by $\mathbb{G}$ the Grassmannian $\mathbb{G}(d,n)$ of $(d+1)$-dimensional linear subspaces in $V$, where a point $x$ of $\mathbb{G}(d,n)$ corresponding to a $(d+1)$-dimensional linear subspace $X$ of $V$. The projective space $\mathbb{P}^n=\mathbb{P}(V)$ is the extreme Grassmiann $\mathbb{G}(0,n)$. The Grassmannian $\mathbb{G}(d,n)$ is isomorphic to $\mathbb{G}(n-d-1,n)$, which is of dimension $(d+1)(n-d)$.
%So we shall focus on the Grassmiann $\mathbb{G}(d,n)$ with $1\le d\le n-d-1$ in this paper.
%
%Denote by $\mathcal O_{\mathbb{G}} \otimes V $ the trivial bundle on $\mathbb{G}$ whose fiber at every point is the vector space $V$. We write $\mathcal R_\mathbb{G}$ for the
%$(d+1)$-rank universal subbundle on $\mathbb{G}$ whose fiber at a point $x\in \mathbb{G}$ is the subspace $X$ itself, and $\mathcal Q_\mathbb{G}$ for the $(n-d)$-rank universal quotient bundle on $\mathbb{G}$ whose fiber
%at a point $x\in \mathbb{G}$ is the quotient space $V/X$. We have the following exact sequence:
\begin{equation}\label{universal sequence}
\xymatrix{
	0\ar[r] & \mathcal R_\mathbb{G}\ar[r]
	&\mathcal O_\mathbb{G}^{\oplus(n+1)}\ar[r]
	&\mathcal Q_\mathbb{G}\ar[r] &0},
\end{equation}
%\begin{equation}\label{universal sequence}
%  0\to \mathcal R_\mathbb{G}\to\mathcal O_\mathbb{G}^{\oplus(n+1)}
%  \to \mathcal Q_\mathbb{G}\to 0 ,
%\end{equation}
which is called the universal sequence on $\mathbb{G}(d,n)$. It is also the Euler sequence when $d=0$, i.e.,
$\mathcal R_{\mathbb{P}(V)}=\mathcal O_{\mathbb{P}(V)}(-1)$ and
$\mathcal Q_{\mathbb{P}(V)}=\mathcal T_{\mathbb{P}(V)}(-1)$, where $\mathcal T_{\mathbb{P}(V)}$ is the tangent bundle on $\mathbb{P}(V)$.

Consider a set of $t$ distinct integers $\underline{d}\coloneqq 1\le d_1< d_2< \dots < d_t\le n$. The partial flag variety $F(d_1, \dots, d_t)$ consists of partial flags of type $\underline{d}$, which are sequences $V_{d_1}\subset V_{d_2}\subset \dots \subset V_{d_t}$, where $\text{dim}(V_{d_i}) = d_i$ for $1 \leq i \leq t$. When $t=1$, this corresponds to the Grassmannian variety defined earlier.
Let's examine the flag variety $F(d_1, d_1+1)$. The subvariety ${V_{d_1+1} \mid V_{d_1} \subset V_{d_1+1} \subset V}$ within $\mathbb{G}(d,n)$ is isomorphic to $\mathbb{P}^{n-d_1}$.
Additionally, we utilize the flag variety $F(d_1, d_2)$, which parameterizes pairs $(V_{d_1}, V_{d_2})$ of linear subspaces in $V$ such that $V_{d_1} \subset V_{d_2} \subset V$. Its dimension is given by $d_1(d_2-d_1) + d_2(n+1-d_2)$.
Considering a flag $V_d \subset V_{d+2} \subset V$, the subvariety ${V_{d+1} \mid V_d \subset V_{d+1} \subset V_{d+2}}$ within $\mathbb{G}(d,n)$ is isomorphic to $\mathbb{P}^1$ \cite{guyot1985caracterisation}. Hence, the flag variety $F(d,d+2)$ represents the variety of lines in $\mathbb{G}(d,n)$.
More generally, for a flag $F(d,d+k+1)$, the subvariety ${V_{d+1} \mid V_d \subset V_{d+1} \subset V_{d+k+1}}$ within $\mathbb{G}(d,n)$ is isomorphic to $\mathbb{P}^{k}$, where $1 \leq k \leq n-d-1$. Thus, the flag variety $F(d,d+k+1)$, with $1 \leq k \leq n-d-1$, represents the variety of $\mathbb{P}^{k}$-planes in $\mathbb{G}(d,n)$. Consequently, we obtain several projective spaces $\mathbb{P}^{k}$, where $1 \leq k \leq n-d$, within the Grassmannian $\mathbb{G}(d,n)$.

%Fixing a bunch of $t$ distinct integers $\underline{d}\coloneqq 1\le d_1
%< d_2< \dots < d_t\le n$, the partial flag variety $F(d_1,...,d_t)$ is the set of partial flags of type $\underline{d}$, namely, sequences $V_{d_1}\subset V_{d_2}\subset \dots \subset V_{d_t}$, with $dim(V_{d_i})=d_i$, $1\le i\le t$. In particular, the case when $t=1$, corresponds to the Grassmannian variety defined above. Consider the flag variety $F(d_1,d_1+1)$. The subvariety $\{V_{d_1+1}|V_{d_1}\subset V_{d_1+1}\subset V\}\subset\mathbb{G}(d,n) $ is isomorphic to $\mathbb{P}^{n-d_1}$.
%Furthermore, we shall make use of the flag variety $F(d_1,d_2)$ which parameterizes the pairs ($V_{d_1},V_{d_2}$) of linear subspaces in $V$ such that
%$V_{d_1}\subset V_{d_2}\subset V$, and its dimension is
%$d_1(d_2-d_1)+d_2(n+1-d_2)$. Given a flag $V_d\subset V_{d+2}\subset V$, the subvariety $\{V_{d+1}|V_d\subset V_{d+1}\subset V_{d+2}\}\subset \mathbb{G}(d,n)$ is isomorphic to $\mathbb{P}^1$ \cite{guyot1985caracterisation}. It follows that the flag variety $F(d,d+2)$ is the variety of lines in $\mathbb{G}(d,n)$.
%In general, for a flag $F(d,d+k+1)$, the subvariety $\{V_{d+1}|V_d\subset V_{d+1}\subset V_{d+k+1}\}\subset \mathbb{G}(d,n)$ is isomorphic to $\mathbb{P}^{k}$, where $ 1\le k\le n-d-1$. Then the flag variety $F(d,d+k+1)$ with $ 1\le k\le n-d-1$ is the variety of $\mathbb{P}^{k}$-planes in $\mathbb{G}(d,n)$. Hence we obtain some projective spaces $\mathbb{P}^{k} ( 1\le k\le n-d)$ in the Grassmiann  $\mathbb{G}(d,n)$.

We revisit some definitions concerning an $r$-bundle $E$ defined over $\mathbb{G}(d,n)$. According to Grothendieck's theorem, there exists an $r$-tuple
$$a_E(l)=(a_1(l),\dots,a_r(l));\quad a1(l)\ge\dots\ge ar(l)$$
such that $E|l \cong \bigoplus\limits_{i=1}^r \mathcal O_{l} (a_i(l)) $.
This $r$-tuple $a_E(l)$ is termed the splitting type of $E$ on $l$.
The bundle $E$ is termed uniform if $a_E(l)$ remains independent of the choice of $l$ in $\mathbb{G}(d,n)$. For instance, $\mathcal R_\mathbb{G}$ and $\mathcal Q_\mathbb{G}$ are uniform and exhibit splitting types $(0,\dots,0,-1)$ and $(1,0,\dots,0)$ respectively.
An $r$-bundle $E$ over $\mathbb{G}(d,n)$ earns the label "homogeneous" if for every automorphism $t \in Aut(\mathbb{G}(d,n))$, we have $t^*E \cong E$. It's evident that if $E$ is homogeneous, so is its dual $E^*$. It's worth noting that while homogeneous bundles are necessarily uniform, the converse doesn't always hold true.

%We recall some definitions for an $r$-bundle $E$ on $\mathbb{G}(d,n)$. According to the theorem of Grothendieck, there is an $r$-tuple
%$$a_E(l)=(a_1(l),\dots,a_r(l));\ a_1(l)\ge\dots\ge a_r(l)$$
%with $E|_l\cong \bigoplus\limits_{i=1}^r \mathcal O_{l} (a_i(l)) $.
%The $r$-tuple $a_E(l)$ is called the splitting type of $E$ on $l$.
%The bundle $E$ is called uniform if $a_E(l)$ is independent of the choice of $l$ in $\mathbb{G}(d,n)$. For example $\mathcal R_\mathbb{G}$ and $ \mathcal Q_\mathbb{G}$ are uniform and of splitting type $(0,\dots,0,-1)$ and $(1,0,\dots,0)$ respectively.
%
%An $r$-bundle $E$ over $\mathbb{G}(d,n)$ is called homogeneous if for every automorphism $t\in Aut(\mathbb{G}(d,n))$, we have $t^*E\cong E$. It is clear that if $E$ is homogeneous, so is its dual $E^*$. Obviously, homogeneous bundles are uniform but the converse is not true.

To elucidate the distinction between homogeneous bundles and uniform bundles, we introduce a novel concept: $\mathbb{P}^k$-homogeneous vector bundles. This definition serves as a broadened interpretation of $k$-homogeneity of vector bundles over projective spaces, as expounded in Section 3.3 of \cite{2011Vector}.

%In order to discribe the "gap" between homogeneous bundles and uniform bundles, we give a new definition of $\mathbb{P}^k$-homogeneous vector bundles, which is a generalization of $k$-homogeneity of vector bundles over projective spaces (see \cite{2011Vector} Section 3.3).
\begin{defn}
An $r$-bundle $E$ is called $\mathbb{P}^k$-homogeneous if for any two linear embeddings $$\phi_1,\phi_2:\mathbb{P}^k\hookrightarrow \mathbb{G}(d,n),$$
we have $\phi_1^*E\cong \phi_2^*E$.
\end{defn}
A bundle is $\mathbb{P}^1$-homogeneous precisely when it is uniform. We see that $\mathbb{P}^{k+1}$-homogeneous bundles are $\mathbb{P}^k$-homogeneous.

\begin{defn}\label{Ph}
We call
\[h(E)={}\text{max}\{k\ |\ 0\leq k\leq  n-d; E\text{ is }\mathbb{P}^k\text{- homogeneous}  \}\]
the degree of $\mathbb{P}$-homogeneity of $E$.
\end{defn}

A uniform $r$-bundle on $\mathbb{G}(d,n)$ with $r\leq d$ decomposes as a direct sum of line bundles, rendering it homogeneous as noted in \cite{du2023vector}. Consequently, the Hilbert polynomial $h(E)$ for an $r$-bundle $E$ with $r\leq d$ on $\mathbb{G}(d,n)$ can only take on values of 0 or $d+1$.
An $r$-bundle $E$ is termed indecomposable if it cannot be expressed as the direct sum $E=F\oplus G$ of two proper subbundles $F,G \subset E$. On the other hand, it is called simple if $h^0(\mathbb{G},E^*\otimes E)=1$. Notably, simple bundles are inherently indecomposable.

%A uniform $r$-bundle on $\mathbb{G}(d,n)$ with $r\le d$ splits as a direct sum of line bundles and is thus homogeneous\cite{du2023vector}. Therefore the only values $h(E)$ for a $r$-bundle $E$ with $r\le d$ on $\mathbb{G}(d,n)$ are either 0 or $d+1$.
%
%An $r$-bundle $E$ is called indecomposable if it is not the direct sum
%$E=F\oplus G$ of two proper subbundles $F,G \subset E$ and simple if
%$h^0(\mathbb{G},E^*\otimes E)=1$. In particular, simple bundles are indecomposable.

In addition, we can identify the Grassmiann $\mathbb{G}(d,n)$ with the homogeneous space $GL_{n+1}(K)/P$ where
\begin{equation*}
	P=\left\{
	\left(
	\begin{array}
		{*{20}{c}}
		{h_{1}}&{h_{2}} \\
		{0_{(n-d)\times (d+1)}}&{h_{4}} \\
	\end{array}\right)
	\in GL_{n+1}(K) \right\}
\end{equation*}
is the maximal parabolic subgroup of $GL_{n+1}(K)$.

Consider the action of $P$ on the right of the vector space $V$. Define
\[ GL_{n+1}(K)\times_P V \coloneqq(GL_{n+1}(K)\times V)/ \sim, \]
where "$\sim$" is the equivalence relation $(g,v)\sim (gp^{-1},pv)$ for every
$g\in GL_{n+1}(K)$, $p\in P$, and $v\in V$. Then, the mapping
\[ \begin{array}{*{20}{c}}
    {\pi_V:} & {GL_{n+1}(K)\times_P V} & {\longrightarrow} & {GL_{n+1}(K)/P\cong \mathbb{G}(d,n)} \
    {} & {(g,v)}& {\longmapsto}& {gP} \
\end{array} \]
is referred to as a vector bundle associated with $V$. The notation $[g,v]\in GL_{n+1}(K)\times_P V$ represents the equivalence class of $(g,v)\in GL_{n+1}(K)\times V$ under the relation $\sim$.

%Suppose $P$ acts on the right of the vector space $V$. Define
%$ GL_{n+1}(K)\times_P V \coloneqq(GL_{n+1}(K)\times V)/ \sim $,
%where "$\sim$" is the equivalence relation $(g,v)\sim (gp^{-1},pv)$ for every
%$g\in GL_{n+1}(K)$, $p\in P$ and $v\in V$. Then
%$$\begin{array}{*{20}{c}}
%	{\pi_V:} & {GL_{n+1}(K)\times_P V} & {\longrightarrow} & {GL_{n+1}(K)/P\cong \mathbb{G}(d,n)} \\
%	{} & {(g,v)}& {\longmapsto}& {gP} \\
%\end{array}$$
%is called a vector bundle associated to $V$. We write $[g,v]\in GL_{n+1}(K)\times_P V$ for the equivalence class of $(g,v)\in GL_{n+1}(K)\times V$ under $\sim$.

A vector bundle $\pi : E \rightarrow \mathbb{G}(d,n)$  is called $GL_{n+1}(K)$-homogeneous if
$E$ has a $GL_{n+1}(K)$-action and $\pi$ is $GL_{n+1}(K)$-equivariant, i.e.,
for every $e\in E$, $\pi (g\cdot e)=g\cdot \pi (e)$.
Then the following diagram
$$\begin{CD}
	GL_{n+1}(K)\times E @> >> E\\
	@V  V V @VV  V\\
	GL_{n+1}(K)\times GL_{n+1}(K)/P @>> > GL_{n+1}(K)/P
\end{CD} $$
commutes.
Indeed, as outlined in \cite{ottaviani1995rational}, the definitions of $GL_{n+1}(K)$-homogeneous bundles and homogeneous bundles are synonymous. Consequently, the tangent bundle $\mathcal{T}_{\mathbb{G}(d,n)} \cong \mathcal R_\mathbb{G}^* \otimes \mathcal Q_\mathbb{G}$ is both $GL_{n+1}(K)$-homogeneous and homogeneous.
Furthermore, the homogeneous bundle $E$ is called irreducible if $V$ constitutes an irreducible $P$-module. This notion underscores the structure and properties of bundles within the framework of the associated group action.

%In fact, the definitions of $GL_{n+1}(K)$-homogeneous bundles and homogeneous bundles are equivalent (\cite{ottaviani1995rational}). It follows that the tangent bundle $\mathcal{T}_{\mathbb{G}(d,n)} \cong \mathcal R_\mathbb{G}^*\otimes\mathcal Q_\mathbb{G}$ is $GL_{n+1}(K)$-homogeneous also homogeneous.
%The homogeneous bundle $E$ is said to be irreducible if $V$ is an irreducible $P$-module.
Consider any $r$-bundle $E$ defined over $\mathbb{G}(d,n)$. A weight $\underline{v}=(v_1, v_2, \dots, v_r)\in \mathbb{Z}^r$ is called dominant if it satisfies the condition:
\[ v_1 \geq v_2 \geq \dots \geq v_r. \]
We denote the set of all dominant weights in $\mathbb{Z}^r$ as $\mathbb{Z}_{\text{dom}}^r$. Let $\mathrm{S}_{\underline{v}} E$ represent the outcome of applying the Schur functor $\mathrm{S}_{\underline{v}}$ associated with the weight $\underline{v}$ to the bundle $E$. Specifically, if $\underline{v}=(p)$, then $\mathrm{S}_{\underline{v}} E \cong S^p E $, where $S^pE$ denotes the $p^{th}$ symmetric product of $E$.
Each irreducible homogeneous bundle over $\mathbb{G}(d,n)$ takes the form $\mathrm{S}_{\underline{u}} \mathcal Q_\mathbb{G}^* \otimes
\mathrm{S}_{\underline{v}} \mathcal R_\mathbb{G}^* $, where $\underline{u} \in \mathbb{Z}_{\text{dom}}^{n-d}$ and $\underline{v} \in \mathbb{Z}_{\text{dom}}^{d+1}$, as asserted in \cite{Brown2018FlagVA}. Notably, both $S^p \mathcal R_\mathbb{G}$ and $S^p \mathcal Q_\mathbb{G}$ exhibit homogeneity.
In characteristic $0$ fields, such as $K$, the Borel-Bott-Weil theorem stands as a potent tool for computing the cohomology of homogeneous bundles, as referenced in \cite{ottaviani1995rational, Brown2018FlagVA}. This theorem empowers comprehensive analyses of the properties and structures of homogeneous bundles.
%Given any $r$-bundle $E$ on $\mathbb{G}(d,n)$. A weight $\underline{v}=(v_1, v_2, \dots, v_r)\in \mathbb{Z}^r$ is said to be dominant if
%$$v_1\ge \dots \ge v_r.$$  We write $\mathbb{Z}_{dom}^r$ for the collection of all dominant weights in $\mathbb{Z}^r$. Denote by $\mathrm{S}_{\underline{v}} E$ the result of applying the Schur functor $\mathrm{S}_{\underline{v}}$ associated to the weight $\underline{v}$ to the bundle $E$. If $\underline{v}=(p)$ then
%$\mathrm{S}_{\underline{v}} E \cong S^p E $,
%where $S^pE$ is the $p^{th}$ symmetric product of $E$.
%Every irreducible homogeneous bundle on $\mathbb{G}(d,n)$ is of the form
%$\mathrm{S}_{\underline{u}} \mathcal Q_\mathbb{G}^* \otimes
%\mathrm{S}_{\underline{v}} \mathcal R_\mathbb{G}^* $,
%where $\underline{u} \in \mathbb{Z}_{dom}^{n-d}$ and $\underline{v} \in \mathbb{Z}_{dom}^{d+1}$ (\cite{Brown2018FlagVA}). In particular, $S^p \mathcal R_\mathbb{G}$
%and $S^p \mathcal Q_\mathbb{G}$ are both homogeneous. Over the field $K$ in characteristic $0$, the Borel-Bott-Weil theorem is a powerful tool to compute the cohomology of homogeneous bundles (\cite{ottaviani1995rational,Brown2018FlagVA}).
Then we have
$$H^i(\mathbb{G},  \mathcal O_{\mathbb{G}})= \left\{
\begin{array}{ll}
	K &  ,i=0 \\
	0 &  ,otherwise \\
\end{array} ,
\right.   $$
$$H^i(\mathbb{G}, S^p \mathcal Q_\mathbb{G})= \left\{
\begin{array}{ll}
	S^pV &  ,i=0 \\
	0 &  ,otherwise \\
\end{array} ,
\right.   $$
$$H^i(\mathbb{G}, S^p \mathcal R^*_\mathbb{G})= \left\{
\begin{array}{ll}
	S^pV^* &  ,i=0 \\
	0 &  ,otherwise \\
\end{array} ,
\right.   $$
\[H^i(\mathbb{G}, S^p \mathcal Q^*_\mathbb{G})= \left\{
\begin{array}{ll}
	S_{(p-d-1,\underbrace{1,\dots,1}_{n-d})}V^*  &  ,i=d+1,p\ge d+2 \\
	0 &  ,otherwise \\
\end{array} ,
\right.   \]
\[H^i(\mathbb{G}, S^p \mathcal R_\mathbb{G})= \left\{
\begin{array}{ll}
	S_ {(p+d-n,\underbrace{1,\dots,1}_{n-d})}V  &  ,i=n-d,p\ge n-d+1 \\
	0 &  ,otherwise \\
\end{array} .
\right.   \]

\section{A $\mathbb{P}^d$-homogeneous bundle over $\mathbb{G}(d,n)$}
Let's examine the universal sequence \eqref {universal sequence} defined over $\mathbb{G}(d,n)$. At a point $x$ within $\mathbb{G}(d,n)$, the fiber of the universal quotient bundle $\mathcal Q_\mathbb{G}$, denoted by $\mathcal Q_{\mathbb{G}}(x)$, is expressed as:
\[ \mathcal Q_{\mathbb{G}}(x) = V/K<e_0,e_1,\cdots,e_d>, \]
where $e_0,e_1,\cdots,e_d$ constitutes a basis of the corresponding $(d+1)$-dimensional subspace $X$ in $V$.
Let's select $m+1~(m>d)$ linearly independent vectors $ w_0,w_1,\cdots,w_m \in V $. These vectors establish certain sections $s_{wi}\in H^0(\mathbb{G},\mathcal{Q}_{\mathbb{G}})$, which, at the point $x$, are articulated as follows:
\begin{equation}\label{swidef}
    s_{w_i}(x) = w_i/K<e_0,e_1,\cdots,e_d>  \ \in \mathcal{Q}_{\mathbb{G}}(x).
\end{equation}
%Considering the universal sequence \eqref {universal sequence} on $\mathbb{G}(d,n)$, the fiber of the universal quotient bundle $\mathcal Q_\mathbb{G}$ at a point $x$ in $\mathbb{G}(d,n)$ denoted by $\mathcal Q_{\mathbb{G}}(x)$ is given by
%\[ \mathcal Q_{\mathbb{G}}(x)=V/K<e_0,e_1,\cdots,e_d>,\]
%where $e_0,e_1,\cdots,e_d$ is a basis of the corresponding $(d+1)$-dimensional subspace $X$ in $V$.
%Let us choose $m+1~(m>d)$ linearly independent vectors
%$ w_0,w_1,\cdots,w_m \in V .$
%They determine some sections $s_{w_i}\in H^0(\mathbb{G},\mathcal{Q}_{\mathbb{G}})$ which at the point $x$ are given by
%\begin{equation*}\label{swidef}
%	s_{w_i}(x)=w_i/K<e_0,e_1,\cdots,e_d> \ \in \mathcal{Q}_{\mathbb{G}}(x).
%\end{equation*}
It's crucial to note that these sections $s_{w_i}$ do not share any common zeros. Suppose there exists a point $x_0\in \mathbb{G}(d,n)$ such that $w_i\in  K<e_0,e_1,\cdots,e_d>$ for all $i\in \{0,1,\cdots,m\}$. This assumption, however, contradicts the linear independence of $w_0,w_1,\cdots,w_m$. Consequently, these sections define a rank-$1$ trivial subbundle in $\mathcal{Q}_\mathbb{G}^{\oplus(m+1)}$, expressed as:
\[ \mathcal{O}\mathbb{G} \xhookrightarrow{(s_{w_0},\cdots,s_{w_m})} \mathcal{Q}_\mathbb{G}^{\oplus(m+1)}. \]
Let $E$ be the quotient bundle, and as a result, we obtain the following exact sequence:
\begin{align}\label{seq1}
    \xymatrix{
        0 \ar[r] &\mathcal{O}_\mathbb{G} \ar[r]
        & \mathcal{Q}_\mathbb{G}^{\oplus(m+1)} \ar[r]
        &E \ar[r] &0
    }.
\end{align}
It's evident from this sequence that $E$ is of rank $(n-d)(m+1)-1$.
%Note that these sections $s_{w_i}$ have no common zeros. If there exists such a point $x_0\in \mathbb{G}(d,n)$, we have
%$w_i\in K<e_0,e_1,\cdots,e_d>$ for all $i\in \{0,1,\cdots,m\}.$
%This contradicts the linearly independency of $w_0,w_1,\cdots,w_m$.
%Thus they define a rank $1$ trivial subbundle in $\mathcal{Q}_\mathbb{G}^{\oplus(m+1)}$, i.e.,
%$\mathcal{O}_\mathbb{G}\xhookrightarrow{(s_{w_0},\cdots,s_{w_m})}\mathcal{Q}_\mathbb{G}^{\oplus(m+1)}.$
%Let $E$ be the quotient bundle, then we obtain the following exact sequence:
%\begin{align}\label{seq1}
%	\xymatrix{
%		0\ar[r] & \mathcal{O}_\mathbb{G}\ar[r]
%		&\mathcal{Q}_\mathbb{G}^{\oplus(m+1)}\ar[r]
%		&E \ar[r] &0
%	}.
%\end{align}
%It is clear that $E$ is of rank $(n-d)(m+1)-1$.

Next we will determine the degree of $\mathbb{P}$-homogeneity of $E$.
\begin{lem}\label{lemw1}
	Let $W_0=Kw_0+Kw_1+\cdots+Kw_m\subseteq K^{n-d+1}$ be the subspace spanned by the vectors $w_0,w_1,\cdots,w_m$ and $W\subseteq K^{n-d+1}$ be a $(k+1)$-dimensional subspace of $K^{n-d+1}$.
	\begin{enumerate}
		\item[i)] If $W_0\nsubseteq W$, then
		\[E|_{\mathbb{P}(W)}\cong\mathcal{T}_{\mathbb{P}(W)}(-1)^{\oplus(m+1)}
		\oplus\mathcal{O}_{\mathbb{P}(W)}^{(n-d-k)(m+1)-1};\]
		\item[ii)]If $W_0\subseteq W$, then
		\[E|_{\mathbb{P}(W)}\cong E'\oplus \mathcal{O}_{\mathbb{P}(W)}^{(n-d-k)(m+1)}\]
		with a bundle $E'$ over $\mathbb{P}(W)$ such that $h^0(\mathbb{P}(W),E'^{\ast})=0$.
	\end{enumerate}
\end{lem}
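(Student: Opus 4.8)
The plan is to restrict the defining sequence \eqref{seq1} to the linear $\mathbb{P}^k=\mathbb{P}(W)$ and to make the linear algebra explicit. Fix the $d$-dimensional subspace $V_d\subset V$ common to all $(d+1)$-planes of $\mathbb{P}(W)$ and set $\bar V:=V/V_d\cong K^{n-d+1}$, so that $W=V_{d+k+1}/V_d$ and each $w_i$ is read modulo $V_d$, giving $W_0=\mathrm{span}(\bar w_0,\dots,\bar w_m)\subseteq\bar V$. A point of $\mathbb{P}(W)$ is a line $L=V_{d+1}/V_d\subseteq W$, and there the fiber of $\mathcal{Q}_\mathbb{G}$ is $V/V_{d+1}=\bar V/L$; thus $\mathcal F:=\mathcal{Q}_\mathbb{G}|_{\mathbb{P}(W)}$ is the bundle with fiber $\bar V/L$. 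First I would produce the splitting $\mathcal F\cong\mathcal{Q}_{\mathbb{P}(W)}\oplus\mathcal O_{\mathbb{P}(W)}^{\oplus(n-d-k)}$ by hand: choosing a complement $\bar V=W\oplus W'$ with $\dim W'=n-d-k$, the image of $\mathcal O\otimes W$ in $\mathcal F$ is $W/L=\mathcal{Q}_{\mathbb{P}(W)}=\mathcal T_{\mathbb{P}(W)}(-1)$ (the Euler sequence on $\mathbb{P}(W)$), while $\mathcal O\otimes W'\to\mathcal F$ is fiberwise injective because $W'\cap L=0$, with trivial image complementary to $\mathcal{Q}_{\mathbb{P}(W)}$; a rank count gives the direct sum. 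The explicit complement is preferable to merely invoking $H^1(\mathbb{P}^k,\mathcal{Q}_{\mathbb{P}^k})=0$ because I will need to track where the sections land.

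Next I would restrict \eqref{seq1}. One must check that $(s_{w_0},\dots,s_{w_m})$ has no common zero on $\mathbb{P}(W)$, i.e.\ that no line $L\subseteq W$ contains every $\bar w_i$; this holds since $\dim W_0\ge(m+1)-\dim V_d=(m+1)-d\ge 2>1$ because $m>d$. Hence the restricted map stays a subbundle inclusion and
\[0\to\mathcal O_{\mathbb{P}(W)}\xrightarrow{(s_{w_i})}\mathcal F^{\oplus(m+1)}\to E|_{\mathbb{P}(W)}\to 0\]
is exact. Writing $\bar w_i=u_i+u_i'$ with $u_i\in W$ and $u_i'\in W'$, and using the splitting, this section becomes $(\sigma,c)$ in $\mathcal{Q}_{\mathbb{P}(W)}^{\oplus(m+1)}\oplus\mathcal O^{\oplus N}$ with $N=(n-d-k)(m+1)$, where $\sigma\in H^0(\mathcal{Q}_{\mathbb{P}(W)}^{\oplus(m+1)})$ gathers the images of the $u_i$ and $c=(u_i')_i\in K^N$ is a constant vector; crucially $W_0\subseteq W$ iff all $u_i'=0$ iff $c=0$.

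If $W_0\subseteq W$ (case ii), then $c=0$, so $(s_{w_i})$ factors through $\mathcal{Q}_{\mathbb{P}(W)}^{\oplus(m+1)}$ and $E|_{\mathbb{P}(W)}\cong E'\oplus\mathcal O^{\oplus N}$ with $E'=\mathrm{coker}(\mathcal O\xrightarrow{\sigma}\mathcal{Q}_{\mathbb{P}(W)}^{\oplus(m+1)})$, which matches the stated form since $N=(n-d-k)(m+1)$. To get $h^0(E'^{\ast})=0$ I would dualize the subbundle inclusion to obtain $0\to E'^{\ast}\to(\mathcal{Q}_{\mathbb{P}(W)}^{\ast})^{\oplus(m+1)}\to\mathcal O\to 0$ and use $H^0(\mathbb{P}(W),\mathcal{Q}_{\mathbb{P}(W)}^{\ast})=0$, which follows from the dual Euler sequence $0\to\mathcal{Q}_{\mathbb{P}(W)}^{\ast}\to\mathcal O\otimes W^{\ast}\to\mathcal O(1)\to 0$ whose induced map $W^{\ast}\to H^0(\mathcal O(1))$ is an isomorphism.

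The real work, and the step I expect to be the main obstacle, is case (i), $W_0\nsubseteq W$, where $c\neq 0$ and I must normalize $(\sigma,c)$ by a global automorphism of $\mathcal F^{\oplus(m+1)}$. The key observation is that a bundle map $\psi:\mathcal O^{\oplus N}\to\mathcal{Q}_{\mathbb{P}(W)}^{\oplus(m+1)}$ is exactly an element of $\mathrm{Hom}_K(K^N,H^0(\mathcal{Q}_{\mathbb{P}(W)}^{\oplus(m+1)}))$, so since $c\neq 0$ I can choose $\psi$ with $\psi(c)=-\sigma$; then the unipotent automorphism $\left(\begin{smallmatrix}\mathrm{id}&\psi\\0&\mathrm{id}\end{smallmatrix}\right)$ is globally defined and sends $(\sigma,c)\mapsto(0,c)$, after which a constant $g\in GL_N$ with $g(c)=e_1$ gives $(0,e_1)$. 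The cokernel of the trivial sub-line-bundle $\mathcal O\xrightarrow{e_1}\mathcal O^{\oplus N}$ is $\mathcal O^{\oplus(N-1)}$, so
\[E|_{\mathbb{P}(W)}\cong\mathcal{Q}_{\mathbb{P}(W)}^{\oplus(m+1)}\oplus\mathcal O^{\oplus(N-1)}=\mathcal T_{\mathbb{P}(W)}(-1)^{\oplus(m+1)}\oplus\mathcal O_{\mathbb{P}(W)}^{\oplus((n-d-k)(m+1)-1)},\]
as asserted. The delicate points are confirming that this absorbing automorphism is globally defined (it is, its off-diagonal entry being a genuine bundle map) and that the surviving summands assemble exactly to $\mathcal T_{\mathbb{P}(W)}(-1)^{\oplus(m+1)}$.
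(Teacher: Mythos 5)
Your proof is correct, but it takes a genuinely different route from the paper's. The paper restricts the sequence \eqref{seq1} to $\mathbb{P}(W)$ and then splits off the trivial summands cohomologically: in case (i) it peels off one nowhere-vanishing section $s_{w_0}$, fits the result into two $3\times 3$ commutative diagrams, and splits the right-hand columns by checking $H^1(\mathbb{P}(W),\mathcal{T}_{\mathbb{P}(W)}(-1)\otimes L^{\ast})=H^1(\mathbb{P}(W),\mathcal{O}\otimes L^{\ast})=0$; in case (ii) it uses the sub-diagram coming from $\mathcal{Q}_{\mathbb{P}(W)}^{\oplus(m+1)}\subset \mathcal{Q}_{\mathbb{G}}^{\oplus(m+1)}|_{\mathbb{P}(W)}$ and splits the column via $h^1(\mathbb{P}(W),E')=0$. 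You instead trivialize everything at the level of linear algebra: the explicit complement $\bar V=W\oplus W'$ splits $\mathcal{Q}_\mathbb{G}|_{\mathbb{P}(W)}$ by hand, the embedding of $\mathcal{O}_{\mathbb{P}(W)}$ becomes a pair $(\sigma,c)$ with $c$ a constant vector detecting exactly whether $W_0\subseteq W$, and in case (i) you normalize $(\sigma,c)$ to $(0,e_1)$ by a global unipotent automorphism $\left(\begin{smallmatrix}\mathrm{id}&\psi\\0&\mathrm{id}\end{smallmatrix}\right)$ built from $\mathrm{Hom}(\mathcal{O}^{\oplus N},\mathcal{Q}_{\mathbb{P}(W)}^{\oplus(m+1)})\cong\mathrm{Hom}_K(K^N,H^0(\mathcal{Q}_{\mathbb{P}(W)}^{\oplus(m+1)}))$, so that no extension ever needs to be split; the only cohomological input left is $H^0(\mathbb{P}(W),\Omega^1_{\mathbb{P}(W)}(1))=0$ for the final claim in (ii), the same step the paper uses. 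Your approach is more elementary and makes the dichotomy completely transparent (it also lands on the correct exponent $(n-d-k)(m+1)-1$, where the paper's last display in case (i) has a typo reading $(n-d-k-1)(m+1)-1$); the paper's diagram-chase is less explicit but is the pattern that generalizes when no such global normalization is available. One small point worth stating explicitly in your write-up: the absence of common zeros on $\mathbb{P}(W)$, which you justify by $\dim W_0\geq m+1-d\geq 2$, is what guarantees that $\sigma$ is still a subbundle inclusion in case (ii), so that $E'$ is locally free.
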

\begin{proof}
\textit{i)} If $W_0$ is not a subset of $W$, then at least one vector, denoted as $w_0$, lies outside $W$. Consequently, the section
\[s_{w_0}|{\mathbb{P}(W)} \in H^0(\mathbb{P}(W),\mathcal{Q}_\mathbb{G}|_{\mathbb{P}(W)})\]
is non-vanishing everywhere. Otherwise, if there existed a point $x\in \mathbb{P}(W)$ such that $s_{w_0}|_{\mathbb{P}(W)}(x)=0$, it would imply $w_0$ belongs to $W$, contradicting the assumption $w_0 \notin W$. Consequently, we establish an exact sequence:
	%\textit{i)} If $W_0\nsubseteq W$, then there is at least one vector, say $w_0$, which is not in $W$.
%	Thus the section
%	\[s_{w_0}|_{\mathbb{P}(W)}\in H^0(\mathbb{P}(W),\mathcal{Q}_\mathbb{G}|_{\mathbb{P}(W)})\]
%	is nowhere zero, otherwise there is a point $x\in \mathbb{P}(W)$ such that
%	$s_{w_0}|_{\mathbb{P}(W)}(x)=0.$
%	This implies $w_0\in W$, which contradict the assumption $w_0\notin W$.
%	Then we get an exact sequence
	$$ 0\longrightarrow \mathcal{O}_{\mathbb{P}(W)}\xrightarrow{s_{w_0}|_{\mathbb{P}(W)}} \mathcal{Q}_\mathbb{G}|_{\mathbb{P}(W)}
	\longrightarrow L \longrightarrow 0.$$
	
	Together with the exact sequence
	\begin{align}\label{seq2}
		0\longrightarrow \mathcal{Q}_{\mathbb{P}(W)}\longrightarrow \mathcal{Q}_\mathbb{G}|_{\mathbb{P}(W)}\longrightarrow
		\mathcal{O}_{\mathbb{P}(W)}^{\oplus(n-d-k)} \longrightarrow 0,
	\end{align}
	there is a commutative diagram in which rows and columns are exact:
	\[
	\xymatrix{
		&         & 0 \ar[d]   & 0 \ar[d]  &   \\
		&         & \mathcal{Q}_{\mathbb{P}(W)} \ar[d] \ar@{=}[r]  &
		\mathcal{T}_{\mathbb{P}(W)}(-1) \ar[d]  &   \\
		0  \ar[r] & \mathcal{O}_{\mathbb{P}(W)} \ar@{=}[d]\ar[r]^{s_{w_0}|_{\mathbb{P}(W)}}
		&\mathcal{Q}_{\mathbb{G}}|_{\mathbb{P}(W)} \ar[d] \ar[r] & L \ar[d] \ar[r] & 0  \\
		0 \ar[r]  &  \mathcal{O}_{\mathbb{P}(W)}\ar[r]&  \mathcal{O}_{\mathbb{P}(W)}^{\oplus(n-d-k)}\ar[d]\ar[r]
		& L'\ar[d] \ar[r] & 0  \\
		&         & 0   & 0   &                                         }
	\]
	where $L'=\mathcal{O}_{\mathbb{P}(W)}^{\oplus(n-d-k-1)} $.
	It follows that
	\begin{equation*}
		L=\mathcal{T}_{\mathbb{P}(W)}(-1)\oplus\mathcal{O}_{\mathbb{P}(W)}^{\oplus(n-d-k-1)}
	\end{equation*}
	as $H^1(\mathbb{P}(W),\mathcal{T}_{\mathbb{P}(W)}(-1))=0$ .
	
	We now consider the commutative diagram
	\[	\xymatrix{
		&    &     & 0 \ar[d]   & 0 \ar[d]  &   \\
		&     &    & \mathcal{Q}_{\mathbb{G}}^{\oplus m}|_{\mathbb{P}(W)} \ar[d] \ar@{=}[r]  &
		\mathcal{Q}_{\mathbb{G}}^{\oplus m}|_{\mathbb{P}(W)}\ar[d]  &   \\
		0  \ar[r] & \mathcal{O}_{\mathbb{P}(W)} \ar@{=}[d]\ar[rr]^{s_{w_i}|_{\mathbb{P}(W)}}
		&&\mathcal{Q}_{\mathbb{G}}^{\oplus(m+1)}|_{\mathbb{P}(W)} \ar[d] \ar[r] &E|_{\mathbb{P}(W)} \ar[d] \ar[r] & 0  \\
		0 \ar[r]  &  \mathcal{O}_{\mathbb{P}(W)}\ar[rr]^{s_{w_0}|_{\mathbb{P}(W)}} &  & \mathcal{Q}_{\mathbb{G}}|_{\mathbb{P}(W)}\ar[d]\ar[r]& L\ar[d] \ar[r] & 0  \\
		&        & & 0   & 0 .  &                                        }
	\]
	Let's assert that the sequence on the right-hand column splits, meaning $E|_{\mathbb{P}(W)}$ is isomorphic to $L\oplus \mathcal{Q}_{\mathbb{G}}^{\oplus m}|_{\mathbb{P}(W)}$. To prove this assertion, we only need to verify that $H^1(\mathbb{P}(W), L^\ast\otimes\mathcal{Q}_{\mathbb{G}}|_{\mathbb{P}(W)})$ equals $0$. By applying the functor $-\otimes L^*$ to the exact sequence \eqref{seq2}, we derive the following exact sequence:
%	Assert that the sequence on the right hand column splits, i.e., $E|_{\mathbb{P}(W)}\cong L\oplus \mathcal{Q}_{\mathbb{G}}^{\oplus m}|_{\mathbb{P}(W)}$. In order to prove it, we only need to check that
%	$H^1(\mathbb{P}(W), L^\ast\otimes\mathcal{Q}_{\mathbb{G}}|_{\mathbb{P}(W)})$ is equal to $0$.
%	Applying the functor $-\otimes L^*$ to the exact sequence \eqref{seq2},
%	we have the following exact sequence
	\[0\longrightarrow\mathcal{T}_{\mathbb{P}(W)}(-1)\otimes L^\ast\longrightarrow \mathcal{Q}_\mathbb{G}|_{\mathbb{P}(W)}\otimes L^\ast\longrightarrow
	\mathcal{O}_{\mathbb{P}(W)}^{\oplus(n-d-k)}\otimes L^\ast\longrightarrow 0, \]
	which induces a long exact sequence
	\begin{align*}
		0\longrightarrow& H^0(\mathbb{P}(W),\mathcal{T}_{\mathbb{P}(W)}(-1)\otimes L^\ast)\longrightarrow
		H^0(\mathbb{P}(W),\mathcal{Q}_\mathbb{G}|_{\mathbb{P}(W)}\otimes L^\ast)\longrightarrow \\
		& H^0(\mathbb{P}(W),\mathcal{O}_{\mathbb{P}(W)}^{\oplus(n-d-k)}\otimes L^\ast)
		\xrightarrow{\Delta^0}   H^1(\mathbb{P}(W),\mathcal{T}_{\mathbb{P}(W)}(-1)\otimes L^\ast)\longrightarrow \\
		&H^1(\mathbb{P}(W),\mathcal{Q}_\mathbb{G}|_{\mathbb{P}(W)}\otimes L^\ast)\longrightarrow
		H^1(\mathbb{P}(W),\mathcal{O}_{\mathbb{P}(W)}^{\oplus(n-d-k)}\otimes L^\ast)\longrightarrow \cdots.
	\end{align*}
	
	If $H^1(\mathbb{P}(W),\mathcal{T}_{\mathbb{P}(W)}(-1)\otimes L^\ast)$ and $H^1(\mathbb{P}(W),\mathcal{O}_{\mathbb{P}(W)}^{\oplus(n-d-k)}\otimes L^\ast)$ are both equal to 0, the assert holds immediately. It is true, since we can compute them as follows
	\begin{align*}
		& H^1(\mathbb{P}(W),\mathcal{T}_{\mathbb{P}(W)}(-1)\otimes L^\ast)= H^1(\mathbb{P}(W), \mathcal{O}_{\mathbb{P}(W)}\oplus \mathcal{T}_{\mathbb{P}(W)}(-1)^{\oplus(n-d-k-1)} )=0 ,\\
		& H^1(\mathbb{P}(W),\mathcal{O}_{\mathbb{P}(W)}^{\oplus(n-d-k)}\otimes L^\ast)=H^1(\mathbb{P}(W), \Omega_{\mathbb{P}(W)}^1(1)^{\oplus(n-d-k)}\oplus\mathcal{O}_{\mathbb{P}(W)}^{\oplus(2n-2d-2k-1)})=0.
	\end{align*}
	
	Note that the exact sequence \eqref{seq2} is also split. Altogether we then have
	\begin{align*}
		E|_{\mathbb{P}(W)} & \cong (\mathcal{T}_{\mathbb{P}(W)}(-1)\oplus\mathcal{O}_{\mathbb{P}(W)}^{\oplus(n-d-k-1)})
		\oplus(\mathcal{T}_{\mathbb{P}(W)}(-1)\oplus \mathcal{O}_{\mathbb{P}(W)}^{\oplus(n-d-k)}) ^{\oplus m} \\
		& = \mathcal{T}_{\mathbb{P}(W)}(-1)^{\oplus(m+1)}\oplus \mathcal{O}_{\mathbb{P}(W)}^{\oplus(n-d-k-1)(m+1)-1}.
	\end{align*}
	
	\textit{ii)} If $W_0\subseteq W$, the sections $s_{w_i}(i=0,1,\dots,m)$ can be regarded as sections in $\mathcal{Q}_{\mathbb{P}(W)}$.
	Then we get the following commutative diagram:
	\[
	\xymatrix{
		&         && 0 \ar[d]   & 0 \ar[d]  &   \\
		0 \ar[r]  &\mathcal{O}_{\mathbb{P}(W)}\ar@{=}[d]\ar[rr]^{(s_{w_0},\cdots,s_{w_m})}    &      & \mathcal{Q}_{\mathbb{P}(W)}^{\oplus (m+1)} \ar[d] \ar[r]  &
		E'\ar[d]\ar[r]  & 0  \\
		0  \ar[r] & \mathcal{O}_{\mathbb{P}(W)}\ar[rr]^{(s_{w_0},\cdots,s_{w_m})}
		& &\mathcal{Q}_{\mathbb{G}}^{\oplus(m+1)}|_{\mathbb{P}(W)} \ar[d] \ar[r] &E|_{\mathbb{P}(W)} \ar[d] \ar[r] & 0  \\
		&    &&  \mathcal{O}_{\mathbb{P}(W)}^{\oplus(n-d-k)(m+1)}\ar[d]\ar@{=}[r]&
		\mathcal{O}_{\mathbb{P}(W)}^{\oplus(n-d-k)(m+1)}\ar[d]  &  \\
		&        & & 0   & 0 .  &                                       }
	\]
	
	Taking the dual of the top exact sequence, we obtain an exact sequence
	\[0\longrightarrow E'^\ast\longrightarrow \Omega_{\mathbb{P}(W)}^1(1)^{\oplus(m+1)}
	\longrightarrow\mathcal{O}_{\mathbb{P}(W)}\longrightarrow 0,\]
which determines a long exact sequence
	\[ 0\longrightarrow H^0(\mathbb{P}(W),E'^\ast)\longrightarrow H^0(\mathbb{P}(W),\Omega_{\mathbb{P}(W)}^1(1)^{\oplus (m+1)})
	\longrightarrow \cdots.\]
	It follows that
	$H^0(\mathbb{P}(W),E'^\ast)=0$, since $H^1(\mathbb{P}(W), \Omega_{\mathbb{P}(W)}^1(1))=0$.

	Taking the cohomology of the top exact sequence directly, we have the following exact sequence
	\[\cdots \longrightarrow H^1(\mathbb{P}(W),\Omega_{\mathbb{P}(W)}^1(1)^{\oplus(m+1)})
	\longrightarrow H^1(\mathbb{P}(W),E')\longrightarrow H^2(\mathbb{P}(W),\mathcal{O}_{\mathbb{P}(W)})\longrightarrow \cdots.\]
	Then $h^1(\mathbb{P}(W),E')=0$ as the result of
	$h^1(\mathbb{P}(W),\Omega_{\mathbb{P}(W)}^1(1))=h^2(\mathbb{P}(W),\mathcal{O}_{\mathbb{P}(W)})=0$.
	
	Therefore, the sequence on the right hand column also splits. Then
	\begin{equation*}
		E|_{\mathbb{P}(W)}\cong E'\oplus \mathcal{O}_{\mathbb{P}(W)}^{(n-d-k)(m+1)}
	\end{equation*}
	with a bundle $E'$ satisfying $h^0(\mathbb{P}(W),E'^\ast)=0$.
	
	All together, we get
	\[E|_{\mathbb{P}(W)}\cong\left\{
	\begin{array}{ll}
		\mathcal{T
		}_{\mathbb{P}(W)}(-1)^{\oplus(m+1)}\oplus\mathcal{O}_{\mathbb{P}(W)}
		^{(n-d-k)(m+1)-1}& ,W_{0}\nsubseteq W ;\\
		E'\oplus \mathcal{O}_{\mathbb{P}(W)}^{(n-d-k)(m+1)}& ,W_{0}\subseteq W .\\
	\end{array}
	\right.   \]
\end{proof}

\begin{cor}
	Given the same assumptions as Lemma \ref{lemw1}, we have 	
	 \begin{equation*}
		h^0(\mathbb{P}(W),E^\ast|_{\mathbb{P}(W)})=
		\left\{
		\begin{array}{ll}
			(n-d-k)(m+1)-1&,W_{0}\nsubseteq W; \\
			(n-d-k)(m+1)&,W_{0}\subseteq W. \\
		\end{array}
		\right.
	\end{equation*}
\end{cor}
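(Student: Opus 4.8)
The plan is to read off both values directly from the two decompositions established in Lemma \ref{lemw1}, since $h^0$ is additive over finite direct sums and commutes with dualization of summands. Concretely, I would first dualize each of the isomorphisms supplied by the lemma; the computation then reduces to knowing $h^0$ of each dualized building block, namely $\Omega^1_{\mathbb{P}(W)}(1)$, $\mathcal{O}_{\mathbb{P}(W)}$, and, in the split case, $E'^*$.

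In the case $W_0\nsubseteq W$, the lemma gives
\[E|_{\mathbb{P}(W)}\cong\mathcal{T}_{\mathbb{P}(W)}(-1)^{\oplus(m+1)}\oplus\mathcal{O}_{\mathbb{P}(W)}^{(n-d-k)(m+1)-1}.\]
Dualizing and using the identification $(\mathcal{T}_{\mathbb{P}(W)}(-1))^*\cong\Omega^1_{\mathbb{P}(W)}(1)$, I would invoke the standard vanishing $h^0(\mathbb{P}(W),\Omega^1_{\mathbb{P}(W)}(1))=0$---provable from the twisted Euler sequence on $\mathbb{P}(W)$, since the Euler map $\mathcal{O}_{\mathbb{P}(W)}^{\oplus(k+1)}\to\mathcal{O}_{\mathbb{P}(W)}(1)$ is an isomorphism on global sections---together with $h^0(\mathbb{P}(W),\mathcal{O}_{\mathbb{P}(W)})=1$. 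These force each $\Omega^1_{\mathbb{P}(W)}(1)$ summand to contribute nothing and each trivial summand to contribute $1$, so exactly the $(n-d-k)(m+1)-1$ trivial factors survive, yielding the first value.

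In the case $W_0\subseteq W$, the lemma gives $E|_{\mathbb{P}(W)}\cong E'\oplus\mathcal{O}_{\mathbb{P}(W)}^{(n-d-k)(m+1)}$ together with the already-established vanishing $h^0(\mathbb{P}(W),E'^*)=0$. Dualizing and applying additivity once more, the $E'^*$ summand contributes $0$ while the $(n-d-k)(m+1)$ trivial summands contribute $1$ each, giving the second value.

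There is essentially no obstacle here: the statement is an immediate bookkeeping consequence of Lemma \ref{lemw1}, and the only input not literally recorded in that lemma is the elementary vanishing $h^0(\Omega^1_{\mathbb{P}(W)}(1))=0$, which already appears implicitly in its proof. The single point worth stating with care is the identification $(\mathcal{T}_{\mathbb{P}(W)}(-1))^*\cong\Omega^1_{\mathbb{P}(W)}(1)$, so that the standard cohomology of projective space can be applied cleanly to the dualized summands.
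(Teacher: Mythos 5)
Your proposal is correct and is exactly the argument the paper intends: the corollary is stated without proof as an immediate consequence of Lemma \ref{lemw1}, obtained by dualizing the two decompositions, using $h^0(\mathbb{P}(W),\Omega^1_{\mathbb{P}(W)}(1))=0$ and $h^0(\mathbb{P}(W),E'^*)=0$, and counting the trivial summands. Nothing further is needed.
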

\begin{thm}\label{MThm}
For every integer $d$ in the range $1 \leq d \leq n-d-1$, there exists a vector bundle $E$ of rank $(n-d)(m+1)-1$ on $\mathbb{G}(d,n)$ with a $\mathbb{P}$-homogeneity degree of $h(E)=d$. Specifically, $E$ is uniformly of splitting type $(1,\dots,1,0,\dots,0)$ but is not nonhomogeneous.
	%For every integer $d$ $(1\le d \le n-d-1)$, there exists a vector bundle $E$ of rank $(n-d)(m+1)-1$ on $\mathbb{G}(d,n)$ with degree of $\mathbb{P}$-homogeneity $h(E)=d$. In particular,
%	$E$ is uniform of splitting type $(1,\dots,1,0,\dots,0)$ but not nonhomogeneous.
\end{thm}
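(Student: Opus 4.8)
The plan is to read off every assertion directly from Lemma \ref{lemw1}. Writing $U=\langle w_0,\dots,w_m\rangle\subseteq V$ with $\dim U=m+1$, a linear $\mathbb{P}^k=\mathbb{P}(W)$ attached to a flag $\langle e_0,\dots,e_d\rangle\subset B$ with $\dim B=d+k+1$ falls into case ii) of the lemma exactly when $W_0\subseteq W$, which here means $U\subseteq B$, and otherwise into case i). Thus the isomorphism class of $E|_{\mathbb{P}^k}$ is controlled by the single numerical comparison of $\dim U=m+1$ with $\dim B=d+k+1$, and the two cases are separated by $h^0$ of the dual, as recorded in the corollary following Lemma \ref{lemw1}. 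The whole proof then reduces to locating the value of $k$ at which case ii) first becomes possible.

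First I would settle uniformity together with the splitting type. A line is a $\mathbb{P}^1$ with $\dim B=d+2$; since $d+2<m+1$ no line can satisfy $U\subseteq B$, so every line lies in case i). Using $\mathcal{T}_{\mathbb{P}^1}(-1)\cong\mathcal{O}_{\mathbb{P}^1}(1)$, Lemma \ref{lemw1} yields $E|_{\ell}\cong\mathcal{O}_{\ell}(1)^{\oplus(m+1)}\oplus\mathcal{O}_{\ell}^{\oplus((n-d-1)(m+1)-1)}$ on every line $\ell$. This is independent of $\ell$, so $E$ is uniform of splitting type $(1,\dots,1,0,\dots,0)$ with exactly $m+1$ ones.

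Next I would prove $h(E)=d$. For the lower bound $h(E)\ge d$, note that for $k\le d$ one has $\dim B=d+k+1\le 2d+1<m+1$, so $U\not\subseteq B$ for every such plane and only case i) occurs; Lemma \ref{lemw1} then gives the single class $E|_{\mathbb{P}^d}\cong\mathcal{T}_{\mathbb{P}^d}(-1)^{\oplus(m+1)}\oplus\mathcal{O}^{\oplus((n-2d)(m+1)-1)}$ for every linear $\phi\colon\mathbb{P}^d\hookrightarrow\mathbb{G}(d,n)$, whence $\phi_1^{*}E\cong\phi_2^{*}E$ and $E$ is $\mathbb{P}^d$-homogeneous. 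For the matching bound $h(E)\le d$ I would break $\mathbb{P}^{d+1}$-homogeneity: now $\dim B=2d+2\ge m+1$, so a $(d+2)$-dimensional $B\supseteq U$ exists, giving a $\mathbb{P}^{d+1}$ of case ii), while a generic $B$ gives case i); by the corollary their restrictions admit $(n-d-k)(m+1)$ and $(n-d-k)(m+1)-1$ independent sections of the dual respectively, so they are non-isomorphic. Hence $h(E)=d$. Finally, since $\mathrm{Aut}(\mathbb{G}(d,n))$ acts transitively on the linear $\mathbb{P}^{d+1}$'s of this flag type, a homogeneous bundle would restrict isomorphically to all of them; because $d+1\le n-d$ such planes exist, the non-isomorphism just found shows $E$ is not homogeneous.

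The delicate point is the exact calibration of the threshold at $k=d$. The lower bound requires $\dim U>2d+1$ while the realisation of case ii) at level $d+1$ requires $\dim U\le 2d+2$, so these force $m+1=2d+2$ and thereby pin the number of chosen vectors to $m=2d+1$; I expect the genuine work to be verifying that both competing cases are realised by honest flags and that case ii) really produces a non-isomorphic restriction. This is aggravated in the self-dual range $n=2d+1$, where $\mathbb{P}^{d+1}$ is a maximal linear subspace, so every such plane is automatically of case ii) and case i) is unavailable; there the failure of $\mathbb{P}^{d+1}$-homogeneity cannot be seen numerically and must instead be detected by comparing the finer summand $E'$ of Lemma \ref{lemw1} ii) for two flags giving projectively inequivalent configurations of the points $[w_i]$, a genuinely bundle-theoretic computation.
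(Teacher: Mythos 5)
You follow the paper's strategy exactly --- everything is read off from Lemma \ref{lemw1} and its corollary, uniformity and the splitting type come from case i) on lines, failure of $\mathbb{P}^k$-homogeneity from the coexistence of the two cases at the first $k$ where case ii) is realizable, and non-homogeneity from transitivity of $\mathrm{Aut}(\mathbb{G}(d,n))$ on flags --- but your dimension count and the paper's disagree, and the disagreement is substantive. The paper asserts that for $k=m-1$ one always has $W_0\nsubseteq W$, concludes $h(E)=m-1$, and sets $m=d+1$; you locate the threshold at $k=m-d$ (case ii) for the plane attached to $V_d\subset V_{d+k+1}$ requires $\langle w_0,\dots,w_m\rangle\subseteq V_{d+k+1}$, hence $m+1\le d+k+1$), conclude $h(E)=m-d-1$, and are forced to $m=2d+1$. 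Your count is the correct one: $W$ and $W_0$ live in $V/V_d$, and the image of $\langle w_0,\dots,w_m\rangle$ there has dimension $m+1-\dim\bigl(V_d\cap\langle w_0,\dots,w_m\rangle\bigr)$, which can drop to $m+1-d$; the paper's claim silently keeps $\dim W_0=m+1$ for every flag and fails once $d\ge 1$. Concretely, with the paper's choice $m=d+1$ the lines $\{V_{d+1}\,:\,V_d\subset V_{d+1}\subset\langle w_0,\dots,w_{d+1}\rangle\}$ already realize case ii), where $E|_l\cong\mathcal{O}_l(2)\oplus\mathcal{O}_l(1)^{\oplus d}\oplus\mathcal{O}_l^{\oplus(n-d-1)(d+2)}$, so that bundle is not even uniform for $d\ge1$. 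Your calibration $m=2d+1$ (rank $(2d+2)(n-d)-1$) is the version of Theorem \ref{MThm} that can actually be proved by this method, and the stated bound $UH(\mathbb{G}(d,n))\le 3(n-d)-2$ would have to be adjusted accordingly.

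Two gaps remain on your side, one of which you flag yourself. First, when $n=2d+1$ every flag-type $\mathbb{P}^{d+1}$ has $V_{d+k+1}=V$, case i) is unavailable at level $k=d+1$, and the separation via $h^0$ of the dual from the corollary gives nothing; there one must compare the summands $E'$ of case ii) for two different flags $V_d\subset V$ directly, as in the classical computation on $\mathbb{P}^n$ that this construction generalizes, and that computation is not carried out. Second, $\mathbb{P}^k$-homogeneity as defined quantifies over \emph{all} linear embeddings $\mathbb{P}^k\hookrightarrow\mathbb{G}(d,n)$, and for $2\le k\le d+1$ there is a second family of linear subspaces, those lying in some $\mathbb{P}(V_{d+2}^{\ast})$, on which $\mathcal{Q}_{\mathbb{G}}$ restricts to $\mathcal{O}(1)\oplus\mathcal{O}^{\oplus(n-d-1)}$ rather than to an extension involving $\mathcal{T}(-1)$; neither you nor the paper restricts $E$ to these, so the lower bound $h(E)\ge d$ needs a separate verification there (or the definition must be read as quantifying only over the flag-type planes, as the paper implicitly does).
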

\begin{proof}
	%We can construct a vector bundle $E$ on $\mathbb{G}(d,n)$ as the sequence \eqref{seq1} with the property in Lemma \ref{lemw1}. If $k=m-1$, we always have $W_{0}\nsubseteq W$, which leads to the result that $E$ is $\mathbb{P}^{(m-1)}$-homogeneous and of splitting type $(1,\dots,1,0,\dots,0)$. If $m<n-d$, we can find another $m$-dimensional projective subspace $\mathbb{P}(W)$ in  $\mathbb{P}^{(n-d)}$ besides $\mathbb{P}(W_0)$ such that the vector bundle $E$ restricts on them are different.  It is clear that $E$ is not $m$-homogeneous.
%	Thus, for every such $(m+1)$-tuple $(w_0,w_1,\cdots,w_m)$, $d<m<n-d$, we have constructed a bundle $E$ on $\mathbb{G}(d,n)$ with  $h(E)=m-1$.
%	In particular, if we take $m=d+1$, then we get a nonhomogeneous uniform
%	bundle of rank $(d+2)(n-d)-1$ and of splitting type $(1,\dots,1,0,\dots,0)$.
We can construct a vector bundle $E$ on $\mathbb{G}(d,n)$ using the sequence \eqref{seq1} with the property outlined in Lemma \ref{lemw1}. If $k=m-1$, it's always the case that $W_{0}\nsubseteq W$, leading to the conclusion that $E$ is $\mathbb{P}^{(m-1)}$-homogeneous and of splitting type $(1,\dots,1,0,\dots,0)$. When $m<n-d$ we can find another $m$-dimensional projective subspace $\mathbb{P}(W)$ in $\mathbb{P}^{(n-d)}$ besides $\mathbb{P}(W_0)$ such that the vector bundle $E$ restricts on them are different. Consequently, $E$ cannot be $m$-homogeneous.
Thus, for every such $(m+1)$-tuple $(w_0,w_1,\cdots,w_m)$, where $d<m<n-d$, we have successfully constructed a bundle $E$ on $\mathbb{G}(d,n)$ with $h(E)=m-1$.
In particular, if we choose $m=d+1$, we obtain a nonhomogeneous uniform bundle of rank $(d+2)(n-d)-1$ and of splitting type $(1,\dots,1,0,\dots,0)$.
\end{proof}

\section{Some nonhomogeneous uniform bundles on $\mathbb{G}(d,n)$  }
Let $p$ be an integer with $p\geq 2$, and let $H$ be a linear subspace of $S^pV$. The fiber of $S^p \mathcal R_\mathbb{G}$ at a point $x$ in $\mathbb{G}(d,n)$, denoted by $S^p \mathcal R_{\mathbb{G},x}$, is the subspace $S^pX$ of $S^pV$. We can then define a bundle morphism $f_H$ from $S^p \mathcal R_\mathbb{G}$ to $\mathcal O_{\mathbb{G}} \otimes {(S^pV/H)}$ such that the fiber morphism $f_{H,x}$ sends $S^pX$ to $S^pX+H$.
It follows that the bundle morphism $f_H$ is injective if and only if the elements in $H$ that can be expressed as elements in $S^pX$ are zero. This leads to the determination of an exact sequence:
%Let $p$ be an integer with $p\ge2$, and $H$ be a linear subspace of $S^pV$.
%The fiber of $S^p \mathcal R_\mathbb{G}$ at a point $x$ in $\mathbb{G}(d,n)$ denoted by $S^p \mathcal R_{\mathbb{G},x}$
%is the subspace $S^pX$ of $S^pV$. Then we can define a bundle morphism
%$f_H$ from $S^p \mathcal R_\mathbb{G}$ to $\mathcal O_{\mathbb{G}} \otimes {(S^pV/H)}$ such that the fiber morphism $f_{H,x}$ sends $S^pX$ to $S^pX+H$.
%It follows that the bundle morphism $f_H$ is injective if and only if
%the elements in $H$ which can be expressed as elements in $S^pX$ are zero.
%This determines an exact sequence:
\begin{equation}\label{new1 sequence}
	\xymatrix{
		0\ar[r] & S^p \mathcal R_\mathbb{G}\ar[r]^-{f_H}
		&\mathcal O_\mathbb{G}\otimes {(S^pV/H)} \ar[r]& E_{H}\ar[r] &0},
\end{equation}
where $E_{H}$ is the associated quotient bundle over $\mathbb{G}(d,n)$.
	
Let $l\in F(d,d+2)$ be a line in $\mathbb{G}(d,n)$. Restricting the sequence \eqref {new1 sequence} to $l$, we obtain the following exact sequence:
\begin{equation}\label{new2 sequence}
	\xymatrix{
		0\ar[r] & S^p \mathcal R_\mathbb{G}|_l\ar[r]^-{f_H|_l}
		&\mathcal O_l\otimes {(S^pV/H)} \ar[r]& E_{H}|_l\ar[r] &0}.
\end{equation}

By taking the dual of the sequence \eqref {new2 sequence}, we get an exact sequence
\begin{equation}\label{new3 sequence}
	\xymatrix{
		0\ar[r] & E_{H}^*|_l\ar[r]&\mathcal O_l\otimes {(S^pV/H)^*} \ar[r]^-{f_H^*|_l} & S^p \mathcal R_\mathbb{G}^*|_l \ar[r] &0},
\end{equation}
which induces a long exact sequence
	\begin{multline}\label{new4 sequence}
	0\rightarrow
	H^0(l,E_{H}^*|_l)\rightarrow
	H^0(l,\mathcal O_l\otimes {(S^pV/H)^*}) \xrightarrow{H^0(f_H^*|_l)}
	H^0(l,S^p \mathcal R_\mathbb{G}^*|_l) \rightarrow \\
	H^1(l,E_{H}^*|_l)\rightarrow
	H^1(l,\mathcal O_l\otimes {(S^pV/H)^*}) \xrightarrow{H^1(f_H^*|_l)}
	H^1(l,S^p \mathcal R_\mathbb{G}^*|_l)\rightarrow
	\cdots.
\end{multline}

\begin{lem}
	The bundle $E_{H}$ is uniform of splitting type $(1,\dots,1,0,\dots,0)$ if and only if, for every line $l$ in $\mathbb{G}$, $ImH^0(f_H^*|_l)^* \cap H =\{0\}$, where the morphism $H^0(f_H^*|_l)$ is induced from the cohomology of the sequence \eqref {new3 sequence}.
\end{lem}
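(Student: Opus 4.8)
The plan is to prove the statement one line at a time: since $E_H$ is uniform of the prescribed splitting type precisely when $E_H|_l$ has splitting type $(1,\dots,1,0,\dots,0)$ for \emph{every} line $l\in F(d,d+2)$, it suffices to show, for a fixed $l$, that $E_H|_l$ is a sum of copies of $\mathcal O_l$ and $\mathcal O_l(1)$ if and only if $\mathrm{Im}\,H^0(f_H^*|_l)^*\cap H=\{0\}$. Write $V_d\subset V_{d+2}$ for the flag defining $l$, so that $l\cong\mathbb{P}(V_{d+2}/V_d)$. The first observation is purely a $\mathbb{P}^1$-bookkeeping fact: from \eqref{new3 sequence}, $E_H^*|_l$ is a subbundle of the trivial bundle $\mathcal O_l\otimes(S^pV/H)^*$, hence every splitting integer of $E_H^*|_l$ is $\le 0$, equivalently every splitting integer of $E_H|_l$ is $\ge 0$. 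Therefore $E_H|_l$ has splitting type $(1,\dots,1,0,\dots,0)$ exactly when no splitting integer of $E_H^*|_l$ is $\le -2$, i.e. exactly when $h^1(l,E_H^*|_l)=0$.

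Next I would extract this vanishing from the long exact sequence \eqref{new4 sequence}. Since $H^1(l,\mathcal O_l\otimes(S^pV/H)^*)=H^1(\mathbb{P}^1,\mathcal O)\otimes(S^pV/H)^*=0$, that sequence identifies $H^1(l,E_H^*|_l)$ with the cokernel of $H^0(f_H^*|_l)$. Hence $h^1(l,E_H^*|_l)=0$ if and only if $H^0(f_H^*|_l)$ is surjective, equivalently its transpose $H^0(f_H^*|_l)^*$ is injective. This reduces the whole statement to an explicit description of the linear map $H^0(f_H^*|_l)$ and of its transpose.

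The key computation is the identification of $H^0(l,S^p\mathcal R_\mathbb{G}^*|_l)$. Restricting the universal sequence \eqref{universal sequence} to $l$ gives $\mathcal R_\mathbb{G}|_l\cong (V_d\otimes\mathcal O_l)\oplus\mathcal O_l(-1)$, whence $S^p\mathcal R_\mathbb{G}^*|_l\cong\bigoplus_{j=0}^p S^{p-j}V_d^*\otimes\mathcal O_l(j)$ and, taking sections, $H^0(l,S^p\mathcal R_\mathbb{G}^*|_l)\cong\bigoplus_{j=0}^p S^{p-j}V_d^*\otimes S^j(V_{d+2}/V_d)^*\cong (S^pV_{d+2})^*$. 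Under this identification $H^0(f_H^*|_l)$ is simply the restriction of functionals $(S^pV/H)^*=H^\perp\to (S^pV_{d+2})^*$, $\phi\mapsto\phi|_{S^pV_{d+2}}$, because $f_H^*$ sends $\phi$ to its restriction to each fibre $S^pX\subseteq S^pV_{d+2}$. Dualizing, $H^0(f_H^*|_l)^*\colon S^pV_{d+2}\to S^pV/H$ is the natural map induced by the composite $S^pV_{d+2}\hookrightarrow S^pV\twoheadrightarrow S^pV/H$, so (viewed inside $S^pV$) its image is $S^pV_{d+2}$ and its kernel is $S^pV_{d+2}\cap H$. Thus $H^0(f_H^*|_l)^*$ is injective if and only if $\mathrm{Im}\,H^0(f_H^*|_l)^*\cap H=S^pV_{d+2}\cap H=\{0\}$, which closes the chain of equivalences and, quantifying over all lines, proves the lemma.

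I expect the main obstacle to be the third paragraph: pinning down the canonical isomorphism $H^0(l,S^p\mathcal R_\mathbb{G}^*|_l)\cong (S^pV_{d+2})^*$ and verifying that $H^0(f_H^*|_l)$ really is restriction of functionals, so that its transpose is the tautological map $S^pV_{d+2}\to S^pV/H$. One must also be careful that the image appearing in the statement is the image of $S^pV_{d+2}$ inside $S^pV$ (before passing to the quotient by $H$), which is precisely what makes the intersection with $H\subseteq S^pV$ meaningful; everything else reduces to the splitting-type bookkeeping on $\mathbb{P}^1$ and the vanishing $h^1(\mathbb{P}^1,\mathcal O)=0$.
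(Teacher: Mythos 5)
Your argument is correct and follows essentially the same route as the paper: global generation (equivalently, $E_H^*|_l$ sitting inside a trivial bundle) pins the splitting integers to be $\ge 0$, the vanishing $h^1(l,E_H^*|_l)=0$ is then the condition for splitting type $(1,\dots,1,0,\dots,0)$, and the long exact sequence \eqref{new4 sequence} converts this into surjectivity of $H^0(f_H^*|_l)$, i.e.\ injectivity of its transpose. Your explicit identification of $H^0(l,S^p\mathcal R_\mathbb{G}^*|_l)^*$ with $S^pV_{d+2}\subseteq S^pV$ is a slightly sharper version of the paper's description of this space as $\bigoplus_{i=0}^{p}S^iD^{\oplus\binom{d-1+p-i}{p-i}}$, and it makes the intersection with $H$ in the statement literally meaningful.
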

\begin{proof}
From the sequence \eqref{new2 sequence}, we can observe that $E_{H}|l$ is generated by global sections. According to Grothendieck's theorem, $E_{H}|l$ can be expressed as $\bigoplus_{i=1}^r \mathcal O_l(a_i(l))$, where $a_i \geq 0$ for $1 \leq i \leq r$, and $r$ is the rank of $E_{H}$.

The splitting type of $E_{H}$ is $(1,\dots,1,0,\dots,0)$ if and only if $h^1(E_{H}^*|l)=0$ for every line $l$ in $\mathbb{G}$, which is equivalent to $h^0(E_{H}(-2)|_l)=0$ by Serre duality theorem.
We can identify the line $l$ with $\mathbb{P}(D)$, where the plane $D$ in $V$ is isomorphic to the space $V_{d+2}/V_d$.

%	From the sequence \eqref {new2 sequence}, we observe that $E_{H}|_l$ is generated by global sections. According to the theorem of Grothendieck, $E_{H}|_l$ can be written as
%	$\bigoplus\limits_{i=1}^r\mathcal O_l(a_i(l))$ with $a_i\ge 0$ for
%	$1\le i\le r$, where $r$ is the rank of $E_{H}$.
%	
%	The splitting type of $E_{H}$ is (1,\dots,1,0,\dots,0) if and only if
%	$h^1(E_{H}^*|_l)=0$ for every line in $\mathbb{G}$, i.e. $h^0(E_{H}(-2)|_l)=0$ by the theorem of Serre duality.
%	We can identify the line $l$ with $\mathbb{P}(D)$, where the plane $D$ in $V$ is isomorphic to the space $V_{d+2}/V_d$.
	%Taking the cohomology of the sequence \eqref {new3 sequence}, we obtain the following long exact sequence:
%    \begin{align*}\label{new4 sequence}
%		0\xrightarrow{\hspace{3.1em}}
%		& H^0(l,E_{H}^*|_l)\xrightarrow{\hspace{3.1em}}
%		& H^0(l,\mathcal O_l\otimes {(S^pV/H)^*}) \xrightarrow{H^0(f_H^*|_l)}
%		& H^0(l,S^p \mathcal R_\mathbb{G}^*|_l) \xrightarrow{\hspace{3.1em}} \\
%		& H^1(l,E_{H}^*|_l)\xrightarrow{\hspace{3.1em}}
%		& H^1(l,\mathcal O_l\otimes {(S^pV/H)^*}) \xrightarrow{H^1(f_H^*|_l)}
%		& H^1(l,S^p \mathcal R_\mathbb{G}^*|_l)\xrightarrow{\hspace{3.1em}}
%		&\cdots
%	\end{align*}
    In the sequence (\ref{new4 sequence}), note that $H^1(l,\mathcal O_l\otimes {(S^pV/H)^*})$ equals $0$. Then, the equation $h^1(E_{H}^*|l)=0$ holds if and only if the morphism
    $$\begin{array}{*{20}{c}}\label{morphism1}
    	{H^0(f_H^*|_l):} & {H^0(l,\mathcal O_l\otimes {(S^pV/H)^*})} & {\longrightarrow} & {H^0(l,S^p \mathcal R_\mathbb{G}^*|_l)}
    \end{array}$$
    is surjective. It is evident that $H^0(l,\mathcal O_l\otimes {(S^pV/H)^*})$ is the space $(S^pV/H)^*$ and $H^0(l,S^p \mathcal R_\mathbb{G}^*|_l)$ is the space $\bigoplus\limits_{i=0}^{p} S^i D^{*\bigoplus\tbinom{d-1+p-i}{p-i}}$. The above morphism $H^0(f_H^*|_l)$
    is surjective if and only if its dual morphism
    $$\begin{array}{*{20}{c}}\label{morphism2}
    	{H^0(f_H^*|_l)^*:} & {\bigoplus\limits_{i=0}^{p}
    	S^i D^{\bigoplus\tbinom{d-1+p-i}{p-i}}} & {\longrightarrow} & {S^pV/H}
    \end{array}$$
    is injective, i.e., $ImH^0(f_H^*|_l)^* \cap H =\{0\}$. Thus we can regard $\bigoplus\limits_{i=0}^{p} S^i D^{\bigoplus\tbinom{d-1+p-i}{p-i}}$ as $ImH^0(f_H^*|_l)^*$. In particular, the dimensions of $ImH^0(f_H^*|_l)^*$ and $\bigoplus\limits_{i=0}^{p} S^i D^{\bigoplus\tbinom{d-1+p-i}{p-i}}$ are the same.
\end{proof}

For convenience, let's denote $W_l$ as the subspace $\bigoplus\limits_{i=0}^{p}S^i D^{\bigoplus\tbinom{d-1+p-i}{p-i}}$ obtained above in $S^pV$. We will use $\widetilde{V}$ for the linear space $\bigoplus\limits_{i=0}^{p}S^i V^{\bigoplus\tbinom{d-1+p-i}{p-i}}$ and $F$ for the flag $F(d,d+2)$. Let $Z$ be the union of subspaces $W_l$ in $S^pV$, i.e, $Z=\bigcup\limits_{l} W_l$, with $l$ running throughout $F$. Thus the bundle $E_{H}$ is uniform of splitting type $(1,\dots,1,0,\dots,0)$ if and only if $Z\cap H=\{0\}$.

\begin{lem}
	The set $Z$ is a closed subvariety of dimension $$(d+2)(n-d)+d-2+\sum\limits_{i=0}^p\tbinom{d-1+p-i}{p-i}(1+i)$$ in $S^PV$.
\end{lem}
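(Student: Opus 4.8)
The plan is to realize $Z$ as the image of a proper morphism out of a vector bundle over the flag variety $F=F(d,d+2)$, and then to extract $\dim Z$ from the dimension of that bundle once the generic fibre of the projection is understood. First I would assemble the subspaces $W_l\subseteq S^pV$ into a family. As $l$ ranges over $F$, the assignment $l\mapsto W_l$ is algebraic, and by the preceding lemma each $W_l$ has the constant dimension $r:=\sum_{i=0}^p\binom{d-1+p-i}{p-i}(1+i)$ (it is identified with $\bigoplus_{i=0}^p S^iD^{\oplus\binom{d-1+p-i}{p-i}}$, where $D=V_{d+2}/V_d$). A family of subspaces of constant dimension over the irreducible base $F$ is a morphism $\gamma\colon F\to\mathrm{Gr}(r,S^pV)$; pulling back the universal subbundle gives a rank-$r$ subbundle $\mathcal W\subseteq\mathcal O_F\otimes S^pV$ whose total space is $\widetilde Z=\{(l,w)\mid w\in W_l\}\subseteq F\times S^pV$. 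Thus $\widetilde Z$ is irreducible of dimension $\dim F+r=(d+2)(n-d)+d-2+\sum_{i=0}^p\binom{d-1+p-i}{p-i}(1+i)$, using the value of $\dim F(d,d+2)$ recorded in the preliminaries.

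Next, $Z=\bigcup_l W_l$ is exactly the image of $\widetilde Z$ under the second projection $\pi\colon\widetilde Z\to S^pV$. Since $F$ is projective, the projection $F\times S^pV\to S^pV$ is proper, so $\pi$ is proper and $Z=\pi(\widetilde Z)$ is a closed irreducible subvariety of $S^pV$; this settles closedness immediately. By the fibre-dimension theorem, $\dim Z=\dim\widetilde Z-e$, where $e$ is the dimension of the general fibre of $\pi$. Hence the stated formula is equivalent to the assertion that $\pi$ is generically finite, that is, that a general $w\in Z$ lies in only finitely many of the $W_l$. Because of irreducibility I would verify this generically, or via upper semicontinuity of fibre dimension combined with the exhibition of a single $w$ with finite fibre.

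The heart of the argument, and the step I expect to be the main obstacle, is therefore recovering the line $l$ — equivalently the flag $V_d\subset V_{d+2}$ — from a general $w\in W_l$. The natural attempt is to read the flag off from the filtration by $V_d$-degree: the top graded piece of $W_l$ is governed by $S^pV_{d+2}$, so one hopes to recover $V_{d+2}$ from $w$ (for instance as the smallest subspace whose $p$-th symmetric power contains $w$, via the span of partial derivatives), while the multiplicity factors $S^{p-i}V_d$ in the lower pieces should pin down $V_d$. Making $e=0$ precise is delicate precisely here: the subspace $W_l$ is manifestly controlled by $V_{d+2}$, so the genuinely subtle point is whether the finer structure of $W_l$ remembers $V_d$ as well, or whether instead many flags $V_d\subset V_{d+2}$ sharing the same $V_{d+2}$ produce overlapping images in $S^pV$. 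Quantifying this — showing that the locus of $l$ with $w\in W_l$ is finite for general $w$, rather than sweeping out a positive-dimensional family of sub-flags — is the crux on which the claimed dimension rests, and I would expect to spend most of the effort controlling this fibre and ruling out (or accounting for) such collapsing before concluding $\dim Z=\dim\widetilde Z$.
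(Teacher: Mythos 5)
Your construction is the same as the paper's: you assemble the $W_l$ into a rank-$r$ subbundle of $\mathcal O_F\otimes S^pV$ over $F=F(d,d+2)$ (the paper calls its total space $\widetilde M$), compute $\dim\widetilde Z=\dim F+\dim W_l$, and deduce closedness of $Z$ from properness of the projection to $S^pV$. Up to that point the two arguments coincide and are both correct. Where you differ is that you explicitly reduce the dimension formula to generic finiteness of $\pi\colon\widetilde Z\to S^pV$ and flag this as the unproved crux, whereas the paper simply declares that the dimension of the image equals the dimension of $\widetilde M$ ``immediately,'' with no justification.

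The worry you raise is not just a gap to be filled; it is exactly where the statement breaks. Choose a splitting $V_{d+2}=V_d\oplus D'$. Every $X$ on the line $l$ has the form $V_d\oplus Kv$ with $v\in D'$, and since the Veronese image of $D'$ spans $S^iD'$, one gets $\mathrm{span}_{X\in l}S^pX=\bigoplus_{i=0}^pS^{p-i}V_d\cdot S^iD'=S^pV_{d+2}$; the dimension count confirms this, because $\sum_{i=0}^p\tbinom{d-1+p-i}{p-i}(1+i)=\dim S^p(K^{d+2})=\tbinom{p+d+1}{p}$. So $W_l=S^pV_{d+2}$ depends only on $V_{d+2}$ and forgets $V_d$ entirely. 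A general $w\in Z$ determines $V_{d+2}$ uniquely (as the smallest subspace $U$ with $w\in S^pU$, recovered from the contractions of $w$ by $S^{p-1}V^*$), so the general fibre of $\pi$ is the $2d$-dimensional family of $V_d\subset V_{d+2}$, and hence $\dim Z=\dim F+\dim W_l-2d=(d+2)(n-d-1)+\tbinom{p+d+1}{p}$, which is $2d$ less than the lemma claims. Neither your proposal nor the paper's proof establishes the asserted equality, and for $d\ge1$ it is in fact false. Fortunately only the upper bound $\dim Z\le\dim F+\dim W_l$ (which both arguments do give) is needed for the subsequent corollary, so the existence of $H_p$ and the rest of the section are unaffected; but if you want the exact dimension you should reparametrize $Z$ by $\mathbb G(d+1,n)$ rather than by $F(d,d+2)$.
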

\begin{proof}
	Suppose $M$ is a subbundle of the trivial bundle $F\times V$ such that
	the fiber of $M$ at a point $l$ in $F$ is $D$. Then we have a subbundle
	$S^pM$ of the trivial bundle $F\times S^pV$ with $S^pM_l=S^pD$. Moreover, $\widetilde{M}\coloneqq\bigoplus\limits_{i=0}^{p}S^i M^{\bigoplus\tbinom{d-1+p-i}{p-i}}$ is a subbundle of the trivial bundle $F\times \widetilde{V}$ with fiber $W_l$. The dimension of $\widetilde{M}$ is equal to $dim F+dimW_l $, i.e,
	$(d+2)(n-d)+d-2+\sum\limits_{i=0}^p\tbinom{d-1+p-i}{p-i}(1+i)$. Denote by $\mathbb{P}(\widetilde{M})$ and $\mathbb{P}(F\times \widetilde{V})$ the projective bundles associated to the bundles $\widetilde{M}$ and $F\times \widetilde{V}$, respectively. Considering the projection morphism
	$\begin{array}{*{20}{c}}\label{morphism5}
		{p_2:} & {\mathbb{P}(F\times \widetilde{V})} & {\longrightarrow} & {\mathbb{P}(\widetilde{V})} ,
	\end{array}$
	$\mathbb{P}(Z)$ is the image of the morphism $p_2|_{\widetilde{M}}$.
	Since $p_2$ is regular, it is also closed. Therefore $Z$ is a closed subvariety of $S^pV$ and its dimension is equal to the dimension of $S^pM$ immediately.
\end{proof}

\begin{cor}
	There is a subspace $H_p$ of codimension $$(d+2)(n-d)+d-2+\sum\limits_{i=0}^p\tbinom{d-1+p-i}{p-i}(1+i)$$ in $S^pV$ such that $Z\cap H_p=\{0\}$. The associated quotient bundle $E_{H_p}$ constructed as \eqref {new1 sequence}  is uniform of splitting type
	$(1,\dots,1,0,\dots,0)$ and of rank $$(d+2)(n-d)+d-2+\sum\limits_{i=0}^p\tbinom{d-1+p-i}{p-i}(1+i)-\tbinom{p+d}{p}.$$
\end{cor}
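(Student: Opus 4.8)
The plan is to obtain $H_p$ from a dimension count and then extract the splitting type and rank from the two results already proved in this section. Write $N=\dim S^pV=\binom{p+n}{p}$ and abbreviate by $\delta$ the integer $(d+2)(n-d)+d-2+\sum_{i=0}^{p}\binom{d-1+p-i}{p-i}(1+i)$, which by the previous lemma equals $\dim Z$. The structural fact I would use at the outset is that $Z=\bigcup_{l}W_l$ is a cone: each $W_l$ is a linear subspace of $S^pV$, so $Z$ is stable under scaling and contains the origin. Hence its projectivization $\mathbb{P}(Z)\subseteq\mathbb{P}(S^pV)=\mathbb{P}^{N-1}$ is a well-defined closed subvariety of dimension $\delta-1$.

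First I would run the standard general-position argument in $\mathbb{P}^{N-1}$. Inside the Grassmannian of codimension-$\delta$ subspaces of $S^pV$, the locus of those $H$ for which $\mathbb{P}(H)$ meets $\mathbb{P}(Z)$ is the image of the incidence variety $\{(H,[z]) : [z]\in\mathbb{P}(Z)\cap\mathbb{P}(H)\}$ under projection, hence closed; a dimension count shows it is proper, since a general linear subspace $\mathbb{P}(H)$ of dimension $N-\delta-1$ is disjoint from a fixed subvariety of dimension $\delta-1$ as soon as $(\delta-1)+(N-\delta-1)=N-2<N-1=\dim\mathbb{P}^{N-1}$. Choosing $H_p$ in the nonempty open complement yields a codimension-$\delta$ subspace with $\mathbb{P}(H_p)\cap\mathbb{P}(Z)=\varnothing$. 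Passing back to the affine cones and using that both $Z$ and $H_p$ contain $0$, this is precisely $Z\cap H_p=\{0\}$.

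With $H_p$ in hand the rest is immediate. By the uniformity criterion established just above the dimension lemma, namely that $E_H$ is uniform of splitting type $(1,\dots,1,0,\dots,0)$ exactly when $Z\cap H=\{0\}$, the bundle $E_{H_p}$ is uniform of the asserted type. For the rank I would use the defining sequence \eqref{new1 sequence}: since $S^p\mathcal R_\mathbb{G}$ has rank $\binom{p+d}{p}$ and $\dim(S^pV/H_p)=\delta$ (as $H_p$ has codimension $\delta$), additivity of ranks in a short exact sequence gives $\operatorname{rank}E_{H_p}=\delta-\binom{p+d}{p}$, which is the stated value.

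The only step carrying genuine content is the existence of $H_p$, and the single subtlety there is that $Z$ passes through the origin. One cannot ask for a linear $H_p$ literally disjoint from $Z$, so the argument must be phrased projectively; the cone structure of $Z$ is exactly what guarantees that an empty projective intersection lifts to the minimal affine intersection $\{0\}$. Everything else is quoted from the two preceding results or follows in one line from \eqref{new1 sequence}.
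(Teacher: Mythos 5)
Your argument is correct and is exactly the (implicit) argument the paper relies on: the corollary is stated without proof as an immediate consequence of the preceding lemma that $Z$ is a closed cone of dimension $\delta:=(d+2)(n-d)+d-2+\sum_{i=0}^{p}\tbinom{d-1+p-i}{p-i}(1+i)$, and your projective general-position count, combined with the uniformity criterion $Z\cap H=\{0\}$ and rank additivity in \eqref{new1 sequence}, is the standard way to fill it in. The only caveat --- shared with the paper --- is the implicit assumption $\dim S^pV=\tbinom{p+n}{p}>\delta$, needed for a nonzero subspace of codimension $\delta$ to exist, which holds once $n$ is large enough relative to $d$ and $p$.
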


Next, we will show that $E_{H_p}$ is nonhomogeneous.

\begin{thm}[Chow]\label{thmz1}
	$Aut(\mathbb{G}(d,n))=PGL(V)$ for $2d\neq n-1$. $PGL(V)$ is a normal subgroup of index 2 in $Aut(\mathbb{G}(d,n))$ for $2d=n-1$.
\end{thm}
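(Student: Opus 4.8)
The plan is to reduce this purely geometric statement to Chow's fundamental theorem on the geometry of Grassmannians, and then to match the two possible families of symmetries with the collineations and correlations of $V$. First I would exploit the rigidity of the polarization: since $\mathbb{G}(d,n)$ has Picard number one, with $\mathrm{Pic}(\mathbb{G}(d,n))=\mathbb{Z}\cdot\mathcal{O}(1)$ generated by the Plücker line bundle, any $\phi\in Aut(\mathbb{G}(d,n))$ satisfies $\phi^{*}\mathcal{O}(1)\cong\mathcal{O}(1)$ (the pullback of the ample generator is ample, so the sign $-1$ on $\mathrm{Pic}$ is excluded). Hence $\phi$ acts on $H^{0}(\mathbb{G},\mathcal{O}(1))\cong(\wedge^{d+1}V)^{*}$ and extends to a linear automorphism of $\mathbb{P}(\wedge^{d+1}V)$ preserving the Plücker-embedded Grassmannian. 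In particular $\phi$ carries lines to lines; since the lines of $\mathbb{G}(d,n)$ are exactly the $\mathbb{P}^{1}$'s of the form $\{X\mid V_{d}\subset X\subset V_{d+2}\}$ parametrized by $F(d,d+2)$ as recalled in Section~2, $\phi$ preserves the adjacency relation ``$X\sim X'$ iff $\dim(X\cap X')=d$'' (equivalently $\dim(X+X')=d+2$).

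Second, I would analyze the maximal projective subspaces contained in $\mathbb{G}(d,n)$. These fall into exactly two families: the stars $\Sigma_{A}=\{X\mid A\subseteq X\}$ for $A$ a $d$-dimensional subspace, each isomorphic to $\mathbb{P}(V/A)\cong\mathbb{P}^{n-d}$; and the tops $\Pi_{B}=\{X\mid X\subseteq B\}$ for $B$ a $(d+2)$-dimensional subspace, each isomorphic to $\mathbb{P}^{d+1}$. Because $\phi$ preserves adjacency it permutes the maximal linear subspaces, and comparison of dimensions forces $\phi$ either to preserve each family or to interchange the two. An interchange is possible only when $\dim\Sigma_{A}=\dim\Pi_{B}$, that is $n-d=d+1$, equivalently $2d=n-1$. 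This single numerical coincidence is precisely the source of the two cases in the statement.

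Finally, I would reconstruct the underlying linear-algebraic data, and this is where I expect the main obstacle to lie. When $\phi$ preserves both families, tracking its action on stars yields an incidence-preserving bijection of the $d$-dimensional subspaces, and Chow's fundamental theorem of the geometry of Grassmannians upgrades this combinatorial datum to a semilinear automorphism $g$ of $V$ with $\phi(X)=g(X)$; when $\phi$ swaps the families one instead recovers a semilinear anti-automorphism, i.e. a correlation $X\mapsto(gX)^{\perp}$ with $g\colon V\to V^{*}$. Since $\phi$ is a biregular morphism of $K$-varieties and $K$ is algebraically closed of characteristic $0$, the accompanying field automorphism is forced to be trivial, so $g$ is $K$-linear; thus $\phi\in PGL(V)$ in the collineation case and $\phi$ lies in the correlation coset otherwise. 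Consequently $Aut(\mathbb{G}(d,n))=PGL(V)$ when $2d\neq n-1$, whereas for $2d=n-1$ the assignment of an automorphism to its type (collineation versus correlation) is a surjective homomorphism $Aut(\mathbb{G}(d,n))\to\mathbb{Z}/2\mathbb{Z}$ with kernel $PGL(V)$ — the composite of two correlations being a collineation — exhibiting $PGL(V)$ as a normal subgroup of index $2$. The technical heart throughout is this reconstruction step, where the purely combinatorial adjacency structure must be promoted to an honest \emph{(semi)}linear map on $V$; everything else is bookkeeping with the two families and their dimensions.
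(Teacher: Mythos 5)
The paper does not actually prove this statement: its ``proof'' is the single line ``See \cite{cowen1989automorphisms}, Theorem 1.1'', deferring entirely to Chow's theorem as presented by Cowen. Your proposal reconstructs the standard argument behind that citation, and the outline is correct: rigidity of the ample generator of $\mathrm{Pic}(\mathbb{G}(d,n))$ linearizes any automorphism on the Pl\"ucker space $\mathbb{P}(\wedge^{d+1}V)$; the two families of maximal linear subspaces (the $\mathbb{P}^{n-d}$'s of $X$ containing a fixed $V_d$ and the $\mathbb{P}^{d+1}$'s of $X$ contained in a fixed $V_{d+2}$) can be interchanged only when $n-d=d+1$, i.e. $2d=n-1$; and reconstructing a (semi)linear map from the induced incidence-preserving bijection is exactly Chow's fundamental theorem of Grassmann geometry. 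What the citation buys the authors is brevity; what your argument buys is an actual explanation of where the dichotomy $2d\neq n-1$ versus $2d=n-1$ comes from. Three points would need tightening in a full write-up. First, when the two families have equal dimension you assert that $\phi$ either preserves each family or swaps both wholesale; this requires noting that each family is parametrized by an irreducible variety and that ``the image lies in family one (resp.\ two)'' is a closed condition, so one alternative holds uniformly. Second, the field automorphism attached to the semilinear map is trivial not because $K$ is algebraically closed of characteristic $0$ (such fields have many wild automorphisms) but because elements of $Aut(\mathbb{G}(d,n))$ are automorphisms of a $K$-variety and hence already act $K$-linearly on $H^0(\mathbb{G},\mathcal{O}(1))$ in your first step; indeed, once $\phi$ is known to come from $PGL(\wedge^{d+1}V)$ preserving the Pl\"ucker image, the semilinear version of Chow's theorem is never needed. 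Third, for $2d=n-1$ one must also exhibit a correlation, e.g. $X\mapsto X^{\perp}$ for a nondegenerate bilinear form on $V$, as an honest automorphism of $\mathbb{G}(d,n)$ in order to conclude that the index is exactly $2$ rather than $1$.
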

\begin{proof}
	See\cite{cowen1989automorphisms},Theorem 1.1.
\end{proof}

\begin{lem}\label{lemz3}
	For any $t\in Aut(\mathbb{G}(d,n))$ which is isomorphic to an element of $PGL(V)$, i.e., $t=T/\sim$ with $T\in GL(V)$ and scalar multiplicative relation $\sim$, $t^*E_{H_p}$ is isomorphic to $E_{(S^pT)^{-1}(H_p)}$.
\end{lem}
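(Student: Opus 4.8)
The plan is to track how the pullback $t^*$ interacts with the defining sequence \eqref{new1 sequence}, exploiting the fact that $T\in GL(V)$ acts simultaneously on the trivial bundle $\mathcal{O}_\mathbb{G}\otimes V$ and on the base $\mathbb{G}(d,n)$ via $X\mapsto TX$.

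First I would record the key geometric compatibility. Since the fiber of $\mathcal{R}_\mathbb{G}$ at a point $x$ is $X$ while the fiber at $t(x)$ is $TX$, the vector-space automorphism $\mathrm{id}\otimes T$ of the trivial bundle $\mathcal{O}_\mathbb{G}\otimes V$ carries the subbundle $\mathcal{R}_\mathbb{G}$ isomorphically onto $t^*\mathcal{R}_\mathbb{G}$ (after the canonical identification $t^*(\mathcal{O}_\mathbb{G}\otimes V)\cong\mathcal{O}_\mathbb{G}\otimes V$). This yields a natural isomorphism $T:\mathcal{R}_\mathbb{G}\xrightarrow{\sim}t^*\mathcal{R}_\mathbb{G}$ compatible with the two inclusions into $\mathcal{O}_\mathbb{G}\otimes V$. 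Applying the functor $S^p$ gives an isomorphism $S^pT:S^p\mathcal{R}_\mathbb{G}\xrightarrow{\sim}t^*S^p\mathcal{R}_\mathbb{G}$ compatible with the inclusions into $\mathcal{O}_\mathbb{G}\otimes S^pV$, whose fiber at $x$ is the restriction of the linear map $S^pT$ to $S^pX$, landing in $S^p(TX)$.

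Next I would pull back \eqref{new1 sequence} along $t$ and precompose its injection with $S^pT$. Recalling that $f_H$ is the composite of the inclusion $S^p\mathcal{R}_\mathbb{G}\hookrightarrow\mathcal{O}_\mathbb{G}\otimes S^pV$ with the quotient $\mathcal{O}_\mathbb{G}\otimes S^pV\to\mathcal{O}_\mathbb{G}\otimes(S^pV/H)$, the fiber of $t^*f_H\circ S^pT$ at $x$ sends $\xi\in S^pX$ to $S^pT(\xi)+H\in S^pV/H$. On the other hand, the linear isomorphism $S^pT$ descends to an isomorphism $\overline{S^pT}:S^pV/(S^pT)^{-1}(H)\xrightarrow{\sim}S^pV/H$, and under $\mathrm{id}\otimes\overline{S^pT}$ the defining injection $f_{(S^pT)^{-1}(H)}$ sends $\xi\mapsto \xi+(S^pT)^{-1}(H)\mapsto S^pT(\xi)+H$. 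Hence $t^*f_H\circ S^pT$ and $(\mathrm{id}\otimes\overline{S^pT})\circ f_{(S^pT)^{-1}(H)}$ agree fiberwise, so they are equal as morphisms $S^p\mathcal{R}_\mathbb{G}\to\mathcal{O}_\mathbb{G}\otimes(S^pV/H)$.

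Finally I would assemble these data into an isomorphism of short exact sequences: the identity on the left terms $S^p\mathcal{R}_\mathbb{G}$ together with the isomorphism $\mathrm{id}\otimes\overline{S^pT}$ on the middle trivial bundles intertwines $f_{(S^pT)^{-1}(H)}$ with $t^*f_H\circ S^pT$, and therefore induces an isomorphism of cokernels $E_{(S^pT)^{-1}(H_p)}\cong t^*E_{H_p}$, which is the assertion. Along the way I would remark that $f_{(S^pT)^{-1}(H_p)}$ is injective, so its cokernel is genuinely a bundle, since $S^pX\cap(S^pT)^{-1}(H_p)=(S^pT)^{-1}\!\big(S^p(TX)\cap H_p\big)$ and $TX$ ranges over all $(d+1)$-dimensional subspaces as $X$ does. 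The main obstacle I anticipate is purely bookkeeping: making the identification $t^*(\mathcal{O}_\mathbb{G}\otimes V)\cong\mathcal{O}_\mathbb{G}\otimes V$ and the isomorphism $t^*\mathcal{R}_\mathbb{G}\cong\mathcal{R}_\mathbb{G}$ precise, and verifying that all the relevant squares commute on fibers. The scalar ambiguity in $t=T/\!\sim$ is harmless, as rescaling $T$ only rescales $\overline{S^pT}$ and leaves the subspace $(S^pT)^{-1}(H_p)$ unchanged.
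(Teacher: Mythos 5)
Your proposal is correct and takes essentially the same route as the paper: both pull back the defining sequence \eqref{new1 sequence} along $t$, use the natural isomorphism $S^p\mathcal{R}_\mathbb{G}\cong t^*S^p\mathcal{R}_\mathbb{G}$ (coming from $T$ acting on $\mathcal{O}_\mathbb{G}\otimes V$) together with $\overline{S^pT}$ on the trivial quotient factor to build a commutative square intertwining $t^*f_{H_p}$ with $f_{(S^pT)^{-1}(H_p)}$, and conclude by identifying cokernels. Your version merely spells out fiberwise the vertical isomorphisms that the paper asserts via homogeneity of $S^p\mathcal{R}_\mathbb{G}$, and adds the (correct) sanity checks on injectivity of $f_{(S^pT)^{-1}(H_p)}$ and on the scalar ambiguity in $T$.
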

\begin{proof}
	The associated quotient bundle $E_{H_p}$ constructed as \eqref {new1 sequence} is determined by the sequence:
	\begin{equation*}\label{new6 sequence}
		\xymatrix{
			0\ar[r] & S^p \mathcal R_\mathbb{G}\ar[r]^-{f_{H_p}}
			&\mathcal O_\mathbb{G}\otimes {(S^pV/H_p)} \ar[r]& E_{H_p}\ar[r] &0}.
	\end{equation*}
	It follows that $t^*E_{H_p}$ is the cokernel of the morphism
	$$\begin{array}{*{20}{c}}\label{morphism8}
		{t^*f_{H_p}:} & {t^*S^p \mathcal R_\mathbb{G}} & {\longrightarrow} & {\mathcal O_\mathbb{G}\otimes {(S^pV/H_p)}}.
	\end{array}$$
	Since $S^p \mathcal R_\mathbb{G}$ is homogeneous and the linear subspace $H_p$ becomes the subspace $(S^pT)^{-1}(H_p)$ induced by the morphism $(S^pT)^{-1}\in GL(S^pV)$, we have the following commutative diagram
%	$$\begin{CD}
%		t^*S^p \mathcal R_\mathbb{G} @>t^*f_{H_p}>> \mathcal %O_\mathbb{G}\otimes {(S^pV/H_p)}\\
%		@VV\phi  V @VV\psi  V\\
%		S^p \mathcal R_\mathbb{G} @>f_{(S^pT)^{-1}(H_p)} >> \mathcal %O_\mathbb{G}\otimes {(S^pV/(S^pT)^{-1}(H_p))}
%	\end{CD} $$
	\begin{equation*}
	\begin{tikzcd}[column sep=huge]
		t^*S^p \mathcal R_\mathbb{G}\arrow[r,"t^*f_{H_p}"]\arrow[d,"\phi"]& \mathcal O_\mathbb{G}\otimes {(S^pV/H_p)}\arrow[d,"\psi"] \\
		S^p \mathcal R_\mathbb{G}\arrow[r,"f_{(S^pT)^{-1}(H_p)}"] &\mathcal O_\mathbb{G}\otimes {(S^pV/(S^pT)^{-1}(H_p))}		
	\end{tikzcd}
	\end{equation*}
	with $\phi$ and $\psi$ are both isomorphic. The associated quotient bundle $E_{(S^pT)^{-1}(H_p)}$ can be constructed as the morphism $f_{(S^pT)^{-1}(H_p)}$ is injective, i.e., $E_{(S^pT)^{-1}(H_p)}$ is the cokernel of $f_{(S^pT)^{-1}(H_p)}$. Furthermore, $t^*E_{H_p}\cong E_{(S^pT)^{-1}(H_p)}$.
\end{proof}

\begin{lem}\label{lemz4}
	Suppose $H$ and $H^\prime$ are two linear subspaces of $S^pV$ such that the morphisms $f_H$ and $f_{H^\prime}$ are both injective. The associated quotient bundles $E_H$ and $E_{H^\prime}$ are isomorphic if and only if $H$ is equal to $H^\prime$.
\end{lem}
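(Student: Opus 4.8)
The plan is to reconstruct the subspace $H$ intrinsically from the bundle $E_H$, so that an abstract isomorphism $E_H\cong E_{H'}$ is forced to identify $H$ with $H'$. The \emph{if} direction needs no argument: if $H=H'$ the defining sequence \eqref{new1 sequence} is literally the same, so the cokernels agree. For the \emph{only if} direction I would first pass to global sections of $0\to S^p\mathcal R_\mathbb{G}\xrightarrow{f_H}\mathcal O_\mathbb{G}\otimes(S^pV/H)\to E_H\to 0$. Since $n-d\ge d+1\ge 2$, the Borel--Bott--Weil table gives $H^0(\mathbb{G},S^p\mathcal R_\mathbb{G})=H^1(\mathbb{G},S^p\mathcal R_\mathbb{G})=0$, so the defining surjection $q_H:\mathcal O_\mathbb{G}\otimes(S^pV/H)\to E_H$ induces an isomorphism $H^0(q_H):S^pV/H\xrightarrow{\sim}H^0(E_H)$. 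Because $\mathrm{ev}_{E_H}\circ(\mathrm{id}\otimes H^0(q_H))=q_H$, this identifies $q_H$ with the evaluation map $\mathrm{ev}_{E_H}:\mathcal O_\mathbb{G}\otimes H^0(E_H)\to E_H$; in particular $E_H$ is globally generated and $\ker(\mathrm{ev}_{E_H})=f_H(S^p\mathcal R_\mathbb{G})$.

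Next I would exploit functoriality of the evaluation map. Given an isomorphism $\Phi:E_H\xrightarrow{\sim}E_{H'}$, the square relating $\mathrm{ev}_{E_H},\mathrm{ev}_{E_{H'}}$ and $\mathrm{id}\otimes H^0(\Phi)$ commutes, so $\Phi$ carries $\ker(\mathrm{ev}_{E_H})$ onto $\ker(\mathrm{ev}_{E_{H'}})$. Transporting through the canonical identifications $H^0(E_H)\cong S^pV/H$ and $H^0(E_{H'})\cong S^pV/H'$, this produces a linear isomorphism $\bar\phi:S^pV/H\xrightarrow{\sim}S^pV/H'$ with $(\mathrm{id}\otimes\bar\phi)(f_H(S^p\mathcal R_\mathbb{G}))=f_{H'}(S^p\mathcal R_\mathbb{G})$ inside $\mathcal O_\mathbb{G}\otimes(S^pV/H')$. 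Hence $(\mathrm{id}\otimes\bar\phi)\circ f_H$ and $f_{H'}$ are two subbundle inclusions of $S^p\mathcal R_\mathbb{G}$ with the same image, so they differ by an automorphism of $S^p\mathcal R_\mathbb{G}$. Since $S^p\mathcal R_\mathbb{G}$ is an irreducible homogeneous bundle, it is simple, i.e. $\mathrm{End}(S^p\mathcal R_\mathbb{G})=K$, so that automorphism is a nonzero scalar $c$, and I obtain $(\mathrm{id}\otimes\bar\phi)\circ f_H=c\,f_{H'}$.

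To conclude I would recall that $f_H=(\mathrm{id}\otimes\pi_H)\circ\iota$ and $f_{H'}=(\mathrm{id}\otimes\pi_{H'})\circ\iota$, where $\iota:S^p\mathcal R_\mathbb{G}\hookrightarrow\mathcal O_\mathbb{G}\otimes S^pV$ is the canonical inclusion and $\pi_H,\pi_{H'}$ are the quotient maps out of $S^pV$. The previous identity then reads $(\mathrm{id}\otimes\gamma)\circ\iota=0$ for the linear map $\gamma:=\bar\phi\circ\pi_H-c\,\pi_{H'}:S^pV\to S^pV/H'$, which means $\gamma$ vanishes on every fiber $S^pX$ of $S^p\mathcal R_\mathbb{G}$. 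Because $d\ge 1$, each $v\in V$ lies in some $(d+1)$-dimensional subspace $X$, whence $v^p\in S^pX$; as the powers $\{v^p\}$ span $S^pV$ in characteristic $0$, the fibers $S^pX$ span $S^pV$ and therefore $\gamma=0$, i.e. $\bar\phi\circ\pi_H=c\,\pi_{H'}$. Taking kernels (with $\bar\phi$ an isomorphism and $c\ne 0$, since $\bar\phi\circ\pi_H$ is surjective onto $S^pV/H'\ne 0$) gives $H=\ker\pi_H=\ker\pi_{H'}=H'$.

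I expect the main obstacle to be the canonicity bookkeeping in the first two steps rather than any hard computation: one must genuinely recover $S^pV/H$ as $H^0(E_H)$ and identify $q_H$ with the evaluation map, so that the subbundle $f_H(S^p\mathcal R_\mathbb{G})$ can be transported across the abstract isomorphism $\Phi$, and one must invoke simplicity of $S^p\mathcal R_\mathbb{G}$ to collapse the resulting automorphism ambiguity to a scalar. The final descent from ``equal on all fibers $S^pX$'' to ``equal as linear maps on $S^pV$'' rests on the spanning fact, which is elementary, so the crux is ensuring every identification above is the canonical one induced by $\Phi$.
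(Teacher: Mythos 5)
Your proof is correct and follows essentially the same route as the paper: both use the vanishing of $H^0$ and $H^1$ of $S^p\mathcal R_\mathbb{G}$ to identify $H^0(E_H)$ canonically with $S^pV/H$, lift the isomorphism $E_H\cong E_{H'}$ to a map of the defining sequences, and compare kernels in the middle terms. Your final step (simplicity of $S^p\mathcal R_\mathbb{G}$ plus the fact that the fibers $S^pX$ span $S^pV$) just makes explicit what the paper compresses into ``restricting to $H$ or $H'$ on their fibers.''
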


\begin{proof}
	It is clear that $E_H$ and $E_{H^\prime}$ are isomorphic if $H = H^\prime$. Conversely, let
	$$\begin{array}{*{20}{c}}\label{morphism6}
		{g:} & {E_H} & {\longrightarrow} & {E_{H^\prime}}
	\end{array}$$
	be a bundle isomorphism. Taking the cohomology of the sequence \eqref {new1 sequence}, we have the following exact sequence
	\begin{equation*}\label{new5 sequence}
		0\to  H^0(\mathbb{G},S^p \mathcal R_\mathbb{G})\to
		H^0(\mathbb{G},\mathcal O_\mathbb{G}\otimes {(S^pV/H)}) \to H^0(\mathbb{G},E_{H})\to  H^1(\mathbb{G},S^p \mathcal R_\mathbb{G})\to  0.
	\end{equation*}
	Since $H^1(\mathbb{G},S^p \mathcal R_\mathbb{G})=0$ under the assumption conditions $1\le d\le n-d-1$ , we can identify
	$H^0(\mathbb{G},E_{H})$ with $S^pV/H$. Then we get an isomorphism
	$$\begin{array}{*{20}{c}}\label{morphism7}
		{H^0(g):} & {S^pV/H} & {\longrightarrow} & {S^pV/H^\prime}
	\end{array}$$
	and the following commutative diagram
	\begin{displaymath}
		\xymatrix{
			0\ar[r]& S^p\mathcal R_\mathbb{G}\ar[r] \ar[d]^{\cong} & \mathcal O_\mathbb{G}\otimes {(S^pV/H)}\ar[r] \ar[d]^{I_{\mathcal O_\mathbb{G}}\bigotimes(H^0(g))} & E_H\ar[r] \ar[d]^{g} &0 \\
			0\ar[r]& S^p\mathcal R_\mathbb{G}\ar[r] & \mathcal O_\mathbb{G}\otimes {(S^pV/H^\prime)}\ar[r]  & E_{H^\prime}\ar[r] & 0\\
		}
	\end{displaymath}
	with three isomorphic vertical arrows. Considering the middle terms of the above diagram, we obtain $H = H^\prime$ by restricting to $H$ or $ H^\prime$ on their fibers.
\end{proof}

\begin{lem}\label{lemz5}
	For any proper linear subspace $H$ of $S^pV$, there exists a linear transformation $T$ of $V$ such that $H$ and $(S^pT)H$ are distinct .
\end{lem}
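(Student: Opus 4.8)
The plan is to prove the stronger assertion that no proper nonzero subspace $H\subsetneq S^pV$ can be stable under the entire action of $GL(V)$ on $S^pV$; the existence of a single $T$ with $(S^pT)H\neq H$ then follows by contraposition, since otherwise $(S^pT)H=H$ for all $T\in GL(V)$ and $H$ would be invariant. Conceptually this is nothing but the irreducibility of $S^pV$ as a $GL(V)$-module in characteristic $0$ (the Schur module of highest weight $(p,0,\dots,0)$), whose only invariant subspaces are $0$ and $S^pV$. Since in the intended application $H=H_p$ is a nonzero proper subspace, this is exactly what is needed to combine with Lemma \ref{lemz3} and Lemma \ref{lemz4}. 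Rather than quote representation theory, I would give the following self-contained argument.

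Fix a basis $e_0,\dots,e_n$ of $V$, so that the monomials $e^\alpha=e_0^{\alpha_0}\cdots e_n^{\alpha_n}$ with $|\alpha|=p$ form a basis of $S^pV$. Suppose for contradiction that $(S^pT)H=H$ for every $T\in GL(V)$. First I would restrict to the diagonal torus: for $T=\mathrm{diag}(t_0,\dots,t_n)$ the operator $S^pT$ is diagonal in the monomial basis with eigenvalue $t^\alpha$ on $e^\alpha$, and these characters are pairwise distinct since $K$ is infinite. Hence a subspace stable under all such $T$ must be a sum of the one-dimensional weight lines, i.e. $H=\bigoplus_{\alpha\in S}Ke^\alpha$ for some set $S$ of degree-$p$ multi-indices, and $S\neq\varnothing$ because $H\neq 0$.

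Next I would feed in the elementary unipotent maps $u_{ij}\colon e_i\mapsto e_i+e_j$ (invertible, fixing the other basis vectors), for $i\neq j$. If $e^\alpha\in H$ with $\alpha_i\geq 1$, then expanding $S^pu_{ij}(e^\alpha)=(e_i+e_j)^{\alpha_i}\prod_{k\neq i}e_k^{\alpha_k}$ by the binomial theorem produces the monomial $e^{\alpha-\epsilon_i+\epsilon_j}$ with coefficient $\binom{\alpha_i}{1}=\alpha_i$, which is nonzero in characteristic $0$ (here $\epsilon_i$ is the $i$-th standard multi-index). Because $H$ is a coordinate subspace and $S^pu_{ij}(e^\alpha)\in H$, every monomial occurring in the expansion lies in $H$; in particular $\alpha-\epsilon_i+\epsilon_j\in S$. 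Thus $S$ is closed under moving one unit of degree from any occupied slot $i$ to any other slot $j$. Since this move-graph on the set of degree-$p$ multi-indices is connected (one can push all the degree onto slot $0$, reaching $p\,\epsilon_0$ from any vertex, and the move relation is symmetric), a nonempty $S$ closed under it must be the whole index set, whence $H=S^pV$, contradicting properness.

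Organized this way the computation is routine; the only points that need care are the use of characteristic $0$ — so that the coefficient $\alpha_i$ does not vanish and the box-moving step genuinely works, in contrast with positive characteristic where $S^pV$ can fail to be irreducible — and the tacit hypothesis $H\neq 0$, which is what makes $S$ nonempty and which holds for $H_p$ as long as $\dim Z<\dim S^pV$ so that $H_p\neq 0$. The only genuine obstacle is the core claim that invariance under the torus together with the unipotents $u_{ij}$ already forces $H$ to be all of $S^pV$; this is precisely the hands-on proof of irreducibility of $S^pV$, and the connectivity of the box-moving graph is its heart.
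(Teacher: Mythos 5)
Your argument is correct, and it is genuinely more than the paper provides: the paper does not prove this lemma at all, but simply cites Lemma~4 of Dr\'ezet \cite{drezet1980exemples}, whose content is exactly the fact you establish, namely that in characteristic $0$ the only $GL(V)$-stable subspaces of $S^pV$ are $0$ and $S^pV$. Your self-contained route --- weight-space decomposition under the diagonal torus, then the unipotent maps $u_{ij}$ and the connectivity of the box-moving graph on degree-$p$ multi-indices --- is the standard hands-on proof of irreducibility of $S^pV$, and every step checks: the coefficient $\binom{\alpha_i}{1}=\alpha_i$ is nonzero precisely because $\mathrm{char}\,K=0$, and since $H$ is a coordinate subspace, membership of $S^p u_{ij}(e^\alpha)$ in $H$ forces every monomial of its expansion into $S$. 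Two small points deserve emphasis. First, you are right that the statement is literally false for $H=0$ (which is fixed by every $S^pT$); the intended reading is ``proper and nonzero,'' which is what the application to $H_p$ requires, and neither the paper nor its citation flags this. Second, your torus step is slightly compressed: the characters $t\mapsto t^\alpha$ are distinct as characters for distinct $\alpha$, and one uses that $K$ is infinite to choose a single diagonal $T$ at which finitely many of them take pairwise distinct values, so that invariance under that one diagonalizable operator with simple spectrum already forces $H$ to be a sum of weight lines. What your write-up buys over the paper's bare citation is a complete elementary argument that makes explicit where characteristic $0$ enters and why the conclusion fails in positive characteristic.
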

\begin{proof}
	See \cite{drezet1980exemples} Lemma 4.
\end{proof}

\begin{prop}
	The associated quotient bundle $E_{H_p}$ on $\mathbb{G}(d,n)$ is nonhomogeneous.
\end{prop}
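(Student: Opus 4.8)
The plan is to exhibit a single automorphism $t$ of $\mathbb{G}(d,n)$ for which $t^*E_{H_p}\not\cong E_{H_p}$; since homogeneity demands $t^*E_{H_p}\cong E_{H_p}$ for \emph{every} $t\in Aut(\mathbb{G}(d,n))$, producing one such $t$ already settles the claim. By Chow's theorem (Theorem \ref{thmz1}) the group $PGL(V)$ sits inside $Aut(\mathbb{G}(d,n))$ (with equality when $2d\neq n-1$, and as an index-$2$ normal subgroup when $2d=n-1$), so it suffices to search for the desired $t$ among elements of $PGL(V)$. The dichotomy in Chow's theorem causes no difficulty, because a single failing $t$ lying in $PGL(V)$ breaks homogeneity in both cases.

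First I would reduce the isomorphism class of the pullback to a statement purely about subspaces of $S^pV$. For $t=T/\!\sim$ with $T\in GL(V)$, Lemma \ref{lemz3} identifies $t^*E_{H_p}\cong E_{(S^pT)^{-1}(H_p)}$, and Lemma \ref{lemz4} asserts that two such quotient bundles are isomorphic exactly when the defining subspaces coincide. Combining these, $t^*E_{H_p}\cong E_{H_p}$ holds if and only if $(S^pT)^{-1}(H_p)=H_p$, which is equivalent to $(S^pT)(H_p)=H_p$ since $S^pT$ is invertible. Thus nonhomogeneity becomes the purely linear-algebraic assertion that the subspace $H_p\subset S^pV$ fails to be preserved by $S^pT$ for some $T\in GL(V)$.

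This last assertion is exactly what Lemma \ref{lemz5} provides: since $H_p$ is a proper nonzero linear subspace of $S^pV$ (its codimension $\dim Z$ is positive, and $H_p\neq\{0\}$ as is implicit in the construction, for otherwise $E_{H_p}$ would be the manifestly homogeneous cokernel of $S^p\mathcal R_\mathbb{G}\hookrightarrow \mathcal O_\mathbb{G}\otimes S^pV$), there exists $T\in GL(V)$ with $(S^pT)(H_p)\neq H_p$. Taking $t=T/\!\sim\in PGL(V)\subset Aut(\mathbb{G}(d,n))$ then gives $t^*E_{H_p}\cong E_{(S^pT)^{-1}(H_p)}\not\cong E_{H_p}$, whence $E_{H_p}$ is nonhomogeneous.

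Once the three lemmas are in hand the argument is essentially bookkeeping, and I expect the only delicate point to be checking the hypothesis of Lemma \ref{lemz4} for the translated subspace, namely that $f_{(S^pT)^{-1}(H_p)}$ is again injective. This is automatic here: pulling back the defining sequence \eqref{new1 sequence} of $E_{H_p}$ along the isomorphism $t$ preserves exactness, so $t^*f_{H_p}$ is injective, and the commutative diagram of Lemma \ref{lemz3} (with its two isomorphic vertical arrows) transports this injectivity to $f_{(S^pT)^{-1}(H_p)}$. I would also make explicit at the outset that $E_{H_p}$ denotes the specific bundle of the preceding corollary, chosen so that $Z\cap H_p=\{0\}$, which guarantees $f_{H_p}$ is injective and hence that all the cited lemmas genuinely apply.
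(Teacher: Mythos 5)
Your proposal is correct and follows essentially the same route as the paper: reduce via Chow's theorem to automorphisms coming from $PGL(V)$, use Lemma \ref{lemz3} to identify $t^*E_{H_p}$ with $E_{(S^pT)^{-1}(H_p)}$, use Lemma \ref{lemz4} to translate non-isomorphism into $H_p\neq (S^pT)^{-1}(H_p)$, and invoke Lemma \ref{lemz5} to produce such a $T$. Your added checks (injectivity of $f_{(S^pT)^{-1}(H_p)}$ and the implicit requirement $H_p\neq\{0\}$) are sensible refinements of the same argument rather than a different approach.
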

\begin{proof}
	According to the Theorem \ref {thmz1}, we have known that every element $t$ in $ Aut(\mathbb{G}(d,n))$ can be represented by a linear transformation $T$ in $PGL(V)$ when $2d\neq n-1$. Then $t^*E_{H_p}$ is not isomorphic to $E_{H_p}$ as the result of Lemma \ref {lemz3}, Lemma \ref {lemz4} and Lemma \ref {lemz5}. For the case when $2d=n-1$,  $Aut(\mathbb{G}(d,n))$ contains all the elements in $PGL(V)$. The result also holds for the same reason as the case when $2d\neq n-1$.
\end{proof}

\begin{prop}
	The associated quotient bundle $E_{H_p}$ on $\mathbb{G}(d,n)$ is simple thus indecomposable.
\end{prop}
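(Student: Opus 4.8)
The plan is to prove the stronger statement that $E_{H_p}$ is simple, i.e. that $h^0(\mathbb{G},E_{H_p}^\ast\otimes E_{H_p})=\dim_K\mathrm{End}(E_{H_p})=1$; indecomposability then follows automatically, as recorded in the preliminaries. Throughout I abbreviate $E=E_{H_p}$, $S=S^p\mathcal R_\mathbb{G}$, $U=S^pV/H_p$ and $Q=\mathcal O_\mathbb{G}\otimes U$, so that the defining sequence \eqref{new1 sequence} reads $0\to S\xrightarrow{f}Q\xrightarrow{q}E\to 0$ with $q$ surjective. The first input I would record is that, by the Borel--Bott--Weil table for $S^p\mathcal R_\mathbb{G}$ collected in Section 2, one has $H^0(\mathbb{G},S)=H^1(\mathbb{G},S)=0$ (its only nonvanishing cohomology sits in degree $n-d\ge 2$). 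Taking global sections of the defining sequence therefore identifies $H^0(\mathbb{G},E)$ canonically with $U$, exactly as in the proof of Lemma \ref{lemz4}.

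Next I would take an arbitrary endomorphism $g\colon E\to E$ and lift it along $q$. Because $Q$ is a trivial bundle, any morphism $Q\to E$ is determined by the induced linear map on global sections $U=H^0(\mathbb{G},Q)\to H^0(\mathbb{G},E)=U$; since $g\circ q$ and $q\circ(\mathrm{id}_{\mathcal O_\mathbb{G}}\otimes\phi)$ both induce $\phi\coloneqq H^0(g)\in\mathrm{End}(U)$ there (the identification makes $H^0(q)$ the identity), they coincide. Thus $g\circ q=q\circ\widetilde\phi$ with $\widetilde\phi=\mathrm{id}_{\mathcal O_\mathbb{G}}\otimes\phi$. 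Consequently $\widetilde\phi$ carries $\ker q=f(S)$ into itself and induces an endomorphism $\psi\in\mathrm{End}(S)$ fitting into a commutative ladder whose bottom row is the defining sequence. Since $S=S^p\mathcal R_\mathbb{G}$ is an irreducible homogeneous bundle it is simple, so $\psi=\lambda\,\mathrm{id}_S$ for some scalar $\lambda\in K$.

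The decisive step is to propagate this scalar from $S$ to all of $E$. I would consider $\widetilde\phi-\lambda\,\mathrm{id}_Q\colon Q\to Q$: it vanishes on the subbundle $f(S)$, hence factors as $h\circ q$ for a unique $h\in\mathrm{Hom}(E,Q)=H^0(\mathbb{G},E^\ast)\otimes U$. The point is that $H^0(\mathbb{G},E^\ast)=0$. To see this I would dualize the defining sequence to $0\to E^\ast\to Q^\ast\xrightarrow{f^\ast}S^\ast\to 0$ and observe that $H^0(f^\ast)$ is the inclusion $(S^pV/H_p)^\ast=H_p^{\perp}\hookrightarrow S^pV^\ast$. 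Indeed $f$ is the composite $S\hookrightarrow\mathcal O_\mathbb{G}\otimes S^pV\to Q$, and the natural map $H^0(\mathcal O_\mathbb{G}\otimes S^pV^\ast)=S^pV^\ast\to H^0(\mathbb{G},S^\ast)=S^pV^\ast$ is an isomorphism, because a constant covector annihilating every $S^pX$ must vanish (the subspaces $S^pX$ span $S^pV$). Injectivity of $H^0(f^\ast)$ forces $H^0(\mathbb{G},E^\ast)=0$, hence $h=0$ and $\widetilde\phi=\lambda\,\mathrm{id}_Q$, i.e. $\phi=\lambda\,\mathrm{id}_U$. Then $g\circ q=q\circ\widetilde\phi=\lambda\,q$, and surjectivity of $q$ gives $g=\lambda\,\mathrm{id}_E$. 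Thus $\mathrm{End}(E)=K$, so $E$ is simple and therefore indecomposable.

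I expect the genuine obstacle to lie in the two cohomological inputs rather than in the diagram chase, which is purely formal: namely verifying $H^0(\mathbb{G},E^\ast)=0$ (equivalently the injectivity of $H^0(f^\ast)$) and the simplicity $\mathrm{End}(S^p\mathcal R_\mathbb{G})=K$. Both reduce to the Borel--Bott--Weil computations already available in Section 2 together with the elementary spanning property of the loci $S^pX$, so no ingredient beyond those should be required.
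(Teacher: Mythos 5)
Your argument is correct, and it rests on exactly the two inputs that the paper's own proof uses, namely the vanishing $H^0(\mathbb{G},E_{H_p}^{\ast})=0$ (equivalently, the injectivity of $(S^pV/H_p)^{\ast}\to H^0(\mathbb{G},S^p\mathcal R_\mathbb{G}^{\ast})$, which the paper obtains when it computes the cohomology of the dual sequence \eqref{new7 sequence}) and the simplicity of $S^p\mathcal R_\mathbb{G}$. The packaging, however, is genuinely different. The paper runs a dimension count: it tensors \eqref{new7 sequence} with $S^p\mathcal R_\mathbb{G}$, uses $H^0(\mathbb{G},S^p\mathcal R_\mathbb{G})=H^1(\mathbb{G},S^p\mathcal R_\mathbb{G})=0$ to identify $H^1(\mathbb{G},E_{H_p}^{\ast}\otimes S^p\mathcal R_\mathbb{G})$ with $H^0(\mathbb{G},S^p\mathcal R_\mathbb{G}^{\ast}\otimes S^p\mathcal R_\mathbb{G})$, quotes Kapranov for the latter being one-dimensional, and then bounds $h^0(\mathbb{G},E_{H_p}^{\ast}\otimes E_{H_p})$ by that number using $H^0(\mathbb{G},E_{H_p}^{\ast})=0$. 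You instead carry out the equivalent chase at the level of morphisms: lift an endomorphism $g$ of $E_{H_p}$ to the trivial bundle, restrict to the subbundle $S^p\mathcal R_\mathbb{G}$ where Schur's lemma forces a scalar, and propagate the scalar back to $E_{H_p}$ using $\mathrm{Hom}(E_{H_p},\mathcal O_\mathbb{G}\otimes(S^pV/H_p))=0$. Your assignment $g\mapsto\psi$ is precisely the paper's connecting homomorphism $H^0(\mathbb{G},E_{H_p}^{\ast}\otimes E_{H_p})\to H^1(\mathbb{G},E_{H_p}^{\ast}\otimes S^p\mathcal R_\mathbb{G})\cong\mathrm{End}(S^p\mathcal R_\mathbb{G})$ made explicit, and your use of $\mathrm{Hom}(E_{H_p},Q)=0$ is exactly the injectivity of that map. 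What your version buys is self-containedness: the citation of Kapranov's Proposition 2.2 is replaced by the standard fact that an irreducible homogeneous bundle is simple, and $H^0(\mathbb{G},E_{H_p}^{\ast})=0$ is verified directly from the fact that the subspaces $S^pX$ span $S^pV$ in characteristic $0$. The cost is a slightly longer, though purely formal, diagram chase.
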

\begin{proof}
	We only need to prove $h^0(\mathbb{G},E^*_{H_p}\otimes E_{H_p})=1$.
	Taking the dual of the exact sequence \eqref {new1 sequence}, we have an exact sequence:
	\begin{equation}\label{new7 sequence}
		\xymatrix{
			0\ar[r] & E_{H_p}^* \ar[r]
			&\mathcal O_\mathbb{G}\otimes {(S^pV^*/H_p^*)} \ar[r]&  S^p \mathcal R^*_\mathbb{G} \ar[r] &0}.
	\end{equation}
	Taking the cohomology of the above exact sequence, we can compute
	$$H^i(\mathbb{G},  E_{H_p}^*)= \left\{
	\begin{array}{ll}
		H_p^* &  ,\ i=1 \\
		0 &  ,\ otherwise \\
	\end{array} .
	\right.   $$
	Applying  the functor $-\otimes S^p \mathcal R_\mathbb{G}$ to the exact sequence \eqref{new7 sequence},
	we obtain the following exact sequence
    \begin{equation*}
	\xymatrix{
		0\ar[r] & E_{H_p}^*\otimes S^p \mathcal R_\mathbb{G} \ar[r]
		& S^p \mathcal R_\mathbb{G}\otimes {(S^pV^*/H_p^*)} \ar[r]&  S^p \mathcal R^*_\mathbb{G}\otimes S^p \mathcal R_\mathbb{G} \ar[r] &0}.
    \end{equation*}
    Then $h^1(\mathbb{G},E^*_{H_p}\otimes S^p \mathcal R_\mathbb{G})=h^1(\mathbb{G},S^p \mathcal R^*_\mathbb{G}\otimes S^p \mathcal R_\mathbb{G})=1$, as the result of the proposition 2.2 in \cite{Kapranov1985ONTD}.
    Furthermore the short exact sequence
    \begin{equation*}
    	\xymatrix{
    		0\ar[r] & E_{H_p}^*\otimes S^p \mathcal R_\mathbb{G} \ar[r]
    		& E_{H_p}^*\otimes {(S^pV/H_p)} \ar[r]&  E_{H_p}^*\otimes E_{H_p} \ar[r] &0}
    \end{equation*}
	yields the following long exact sequence:
	\[\cdots \longrightarrow H^0(\mathbb{G},E_{H_p}^*\otimes {(S^pV/H_p)})
	\longrightarrow H^0(\mathbb{G},E_{H_p}^*\otimes E_{H_p})
	\longrightarrow H^1(\mathbb{G},E_{H_p}^*\otimes S^p \mathcal R_\mathbb{G})\longrightarrow \cdots .\]
	It follows that $h^0(\mathbb{G},E^*_{H_p}\otimes E_{H_p})=1$.
\end{proof}

 To sum up, we have the following results.
\begin{thm}
There exists an indecomposable uniform vector bundle of rank $$(d+2)(n-d)+d-2+\sum\limits_{i=0}^p\tbinom{d-1+p-i}{p-i}(1+i)-\tbinom{p+d}{p}$$ with splitting type
$(1,\dots,1,0,\dots,0)$ but nonhomogeneous over Grassmannian $\mathbb{G}(d,n)$.
	\end{thm}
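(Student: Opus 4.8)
The plan is to exhibit the bundle $E_{H_p}$ furnished by the preceding Corollary and then verify each clause of the statement by assembling the results already proved in this section; no new construction is required. I would begin by recalling the construction: choosing the subspace $H_p\subseteq S^pV$ of codimension $(d+2)(n-d)+d-2+\sum_{i=0}^p\tbinom{d-1+p-i}{p-i}(1+i)$ guaranteed by the Corollary, the morphism $f_{H_p}$ in \eqref{new1 sequence} is injective and one has $Z\cap H_p=\{0\}$. By the splitting criterion (the first Lemma of this section) together with the reformulation that $E_H$ is uniform of splitting type $(1,\dots,1,0,\dots,0)$ precisely when $Z\cap H=\{0\}$, this immediately yields that $E_{H_p}$ is uniform of the asserted splitting type. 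Its rank is read off from \eqref{new1 sequence} as $\dim(S^pV/H_p)-\mathrm{rank}\,S^p\mathcal R_\mathbb{G}$, i.e. the codimension of $H_p$ minus $\tbinom{p+d}{p}$, which is exactly the rank claimed.

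Next, for nonhomogeneity I would invoke the Proposition already established. By Chow's Theorem~\ref{thmz1}, every automorphism $t$ of $\mathbb{G}(d,n)$ is induced by some $T\in PGL(V)$ (with the only subtlety being the index-two enlargement when $2d=n-1$, which contains all of $PGL(V)$ and so is handled identically). Lemma~\ref{lemz3} identifies $t^*E_{H_p}$ with $E_{(S^pT)^{-1}(H_p)}$; Lemma~\ref{lemz4} shows that $E_H\cong E_{H'}$ forces $H=H'$; and Lemma~\ref{lemz5} produces a linear transformation $T$ with $(S^pT)^{-1}(H_p)\neq H_p$. Chaining these, $t^*E_{H_p}\cong E_{(S^pT)^{-1}(H_p)}\not\cong E_{H_p}$ for this $t$, so $E_{H_p}$ admits no $\mathrm{Aut}(\mathbb{G}(d,n))$-invariance of its isomorphism class and is therefore not homogeneous.

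Finally, indecomposability follows from the simplicity Proposition: its cohomology computation gives $h^0(\mathbb{G},E_{H_p}^*\otimes E_{H_p})=1$, so $E_{H_p}$ is simple, and simple bundles are indecomposable. Collecting the three points — uniform of the stated splitting type and rank, nonhomogeneous, and indecomposable — proves the theorem.

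Since the final statement is a synthesis, the genuine difficulty lies not in the assembly but in two of the inputs it rests on. The first is the dimension count showing that $Z$ is a proper closed cone of dimension $(d+2)(n-d)+d-2+\sum_{i=0}^p\tbinom{d-1+p-i}{p-i}(1+i)$, which is what permits a generic linear $H_p$ of complementary codimension to meet $Z$ only at the origin; here the key is the projective-bundle description $\mathbb{P}(Z)=\mathrm{im}\,p_2|_{\widetilde M}$, making $Z$ closed and of the same dimension as the total space of $\widetilde M$. The second is the simplicity computation, which depends on the vanishing $H^1(\mathbb{G},S^p\mathcal R_\mathbb{G})=0$ in the range $1\le d\le n-d-1$ and on the equality $h^1(\mathbb{G},S^p\mathcal R_\mathbb{G}^*\otimes S^p\mathcal R_\mathbb{G})=1$ borrowed from Kapranov. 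Granting those, the remaining bookkeeping is routine.
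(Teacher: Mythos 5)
Your proposal is correct and follows essentially the same route as the paper, which likewise obtains the theorem by assembling the preceding Corollary (existence, uniformity, splitting type, rank), the nonhomogeneity Proposition via Chow's theorem and Lemmas \ref{lemz3}--\ref{lemz5}, and the simplicity Proposition for indecomposability. Your identification of the rank as the codimension of $H_p$ minus $\operatorname{rank} S^p\mathcal R_\mathbb{G}=\tbinom{p+d}{p}$ matches the paper's computation exactly.
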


Finally, we propose the following problem for the interested readers.
\begin{prob}
Determine the uniform-homogeneous shreshold $UH(X)$ for rational homogeneous spaces $X$ of Picard number $1$.
\end{prob}

\bibliography{ref}
\nocite{*}
\bibliographystyle{plain}

\end{document}